\DeclareMathOperator{\Hom}{Hom}
\DeclareMathOperator{\SU}{SU} \DeclareMathOperator{\SO}{SO}
\DeclareMathOperator{\conj}{conj} \DeclareMathOperator{\Sp}{Sp}
\DeclareMathOperator{\Spec}{Spec}
\DeclareMathOperator{\Mat}{Mat} \DeclareMathOperator{\Facets}{Facets}
\DeclareMathOperator{\Subgroups}{Subgroups} \DeclareMathOperator{\odd}{odd}
\newcommand{\simc}{\!\!\sim}
\newcommand{\jump}[1]{\ensuremath \raisebox{1pt}{$#1$}}
\newcommand{\sit}[1]{\ensuremath \raisebox{-1pt}{$#1$}}
\newcommand{\Zo}{\mathbb{Z}}
\newcommand{\Ro}{\mathbb{R}}
\newcommand{\Rg}{\mathbb{R}_{\geqslant 0}}
\newcommand{\Co}{\mathbb{C}}
\newcommand{\Ho}{\mathbb{H}}
\newcommand{\Oo}{\mathbb{O}}
\newcommand{\Zt}{\Zo_2}
\newcommand{\dd}{\partial}
\newcommand{\F}{\mathcal{F}}
\newcommand{\Z}{\mathcal{Z}}
\newcommand{\ZH}{\Z^{\Ho}}
\newcommand{\RP}{\mathbb{R}P}
\newcommand{\CP}{\mathbb{C}P}
\newcommand{\HP}{\mathbb{H}P}
\newcommand{\OP}{\mathbb{O}P}
\newcounter{stmcounter}[section]
\newcounter{thcounter}
\newcounter{problcounter}
\numberwithin{equation}{section}
\theoremstyle{plain}
\newtheorem{cor}[stmcounter]{Corollary}
\newtheorem{thm}[thcounter]{Theorem}
\newtheorem{prop}[stmcounter]{Proposition}
\newtheorem{lem}[stmcounter]{Lemma}
\newtheorem{probl}[problcounter]{Problem}
\theoremstyle{definition}
\newtheorem{defin}[stmcounter]{Definition}
\theoremstyle{remark}
\newtheorem{ex}[stmcounter]{Example}
\newtheorem{rem}[stmcounter]{Remark}
\newtheorem{con}[stmcounter]{Construction}
\begin{document}

\title{Torus action on quaternionic projective plane and related spaces}

\author{Anton Ayzenberg}
\address{Faculty of computer science, Higher School of Economics
}
\email{ayzenberga@gmail.com}

\date{\today}
\thanks{This work is supported by the Russian Science Foundation under grant 18-71-00009.}

\subjclass[2010]{Primary 55R91, 57S15, 57S25, 22F30; Secondary 57R91, 57S17, 20G20, 57M60, 20G41}

\keywords{torus action, complexity one, quaternions, octonions, quasitoric manifold, Kuiper--Massey theorem}

\begin{abstract}
For an action of a compact torus $T$ on a smooth compact manifold~$X$ with isolated fixed points the number $\frac{1}{2}\dim X-\dim T$ is called the complexity of the action. In this paper we study certain examples of torus actions of complexity one and describe their orbit spaces. We prove that $\HP^2/T^3\cong S^5$ and $S^6/T^2\cong S^4$, for the homogeneous spaces $\HP^2=\Sp(3)/(\Sp(2)\times\Sp(1))$ and $S^6=G_2/\SU(3)$. Here the maximal tori of the corresponding Lie groups $\Sp(3)$ and $G_2$ act on the homogeneous spaces by the left multiplication. Next we consider the quaternionic analogues of smooth toric surfaces: they give a class of 8-dimensional manifolds
with the action of $T^3$, generalizing $\HP^2$. We prove that their orbit spaces are homeomorphic to $S^5$ as well. We link this result to Kuiper--Massey theorem and some of its generalizations.
\end{abstract}

\maketitle

\section{Introduction}\label{secIntro}

Consider an effective action of the compact torus $T^k$ on a compact smooth manifold $X=X^{2n}$, such that the set of fixed points is finite and non-empty. The number $n-k$ can be shown to be nonnegative; it is called the \emph{complexity of the action}.

Buchstaber and Terzic \cite{BTober,BT,BT2} introduced the theory of $(2n,k)$-manifolds to study the orbit spaces of non-free actions of a compact torus $T^k$ on $2n$-manifolds. Using this theory they proved the homeomorphisms $G_{4,2}/T^3\cong S^5$ and $F_3/T^2\cong S^4$, where $G_{4,2}$ is the Grassmann manifold of complex $2$-planes in $\Co^4$, and $F_3$ is the manifold of complete flags in~$\Co^3$. The torus actions are naturally induced from the standard torus actions on $\Co^4$ and $\Co^3$ respectively. In both cases of $G_{4,2}$ and $F_3$ the complexity of the natural torus action is equal to $1$. Karshon and Tolman proved in \cite{KTmain} that for hamiltonian actions of complexity one, the orbit space is homeomorphic to a sphere provided that the weights of the tangent representation at each fixed point is in general position (see Definition \ref{defGenPosLocStdAct}). This result covers the cases of $G_{4,2}$ and $F_3$.

Now assume that $X^{2n}$ is a quasitoric manifold with the action of $T^n$, and $T^{n-1}\subset T^n$ is a subtorus such that the induced action of $T^{n-1}$ on $X^{2n}$ is in general position. Then the orbit space of the induced action $X^{2n}/T^{n-1}$ is homeomorphic to a sphere~\cite{AyzCompl}.

For an action of the torus on a space $X$ consider the fibration $X\times_TET\stackrel{X}{\to}BT$ and the corresponding Serre spectral sequence
\begin{equation}\label{eqSerreSeq}
E_2^{*,*}\cong H^*(BT)\otimes H^*(X)\Rightarrow H^*(X\times_TET)=H^*_T(X),
\end{equation}
where $ET\stackrel{T}{\to} BT$ is the universal $T$-bundle, $X\times_TET$ is the Borel construction of $X$, and $H^*_T(X)$ is the equivariant cohomology algebra\footnote{We assume that all cohomology rings are taken with $\Zo$ coefficients unless stated otherwise.} of $X$. The space $X$ with a torus action is called \emph{equivariantly formal} in the sense of Goresky--Macpherson \cite{GKM} if its Serre spectral sequence \eqref{eqSerreSeq} degenerates at $E_2$ term. In particular, the spaces with vanishing odd degree cohomology are all equivariantly formal. It is known \cite[Prop.5.8]{Kirw} that manifolds with hamiltonian torus actions are equivariantly formal. Therefore, all the manifolds listed above: Grassmann manifolds, flag manifolds, symplectic manifolds with hamiltonian actions, quasitoric manifolds, are equivariantly formal. This leads to the following question.

\begin{probl}\label{problMain}
Assume the complexity one action of $T=T^{n-1}$ on a compact smooth manifold $X=X^{2n}$ has isolated fixed points and the tangent weights at each fixed point are in general position. Is there a connection between equivariant formality of $X$ and the property that $X/T$ is homeomorphic to a sphere?
\end{probl}

This problem is related to the result of Masuda and Panov \cite{MasPan}, which states that complexity zero action is equivariantly formal if and only if its orbit space is a homology polytope.

We have a non-example supporting the relation between equivariant formality and sphericity of the orbit space. In \cite{AyzMatr} we studied the spaces of isospectral periodic tridiagonal matrices of size $n$. These spaces provide an infinite series of manifolds with torus actions of complexity one satisfying the assumption of the conjecture. For $n\geqslant 4$ these manifolds are not equivariantly formal and their orbit spaces are not spheres.

Besides $G_{4,2}$ and $F_3$ there exist two other natural examples of actions satisfying the assumptions of the problem: these examples appear in the classification of symmetric complexity one torus actions, see \cite{Kur}. These are the $T^2$-action on $S^6=G_2/SU(3)$ and the $T^3$-action on $\HP^2=\Ho^3/\Ho^*\cong \Sp(3)/(\Sp(2)\times\Sp(1))$. Here $G_2$ is considered as the automorphism group of octonion algebra, $S^6$ is the sphere of imaginary octonions of unit length, and $T^2$ is the maximal torus of $G_2$, acting on $G_2$ by left multiplication. The torus $T^3$ acts on $\Ho^3$ by left multiplication of each homogeneous coordinate; it could as well be understood as a maximal torus of $\Sp(3)$ acting on the homogeneous space $\Sp(3)/(\Sp(2)\times\Sp(1))$ by the left multiplication. Notice, however, that the action of $T^3$ on $\HP^2$ has a discrete subgroup $\langle (-1,-1,-1)\rangle\cong \Zt$ as a non-effective kernel. To make the action effective, we consider the action of the torus $T^3/\langle (-1,-1,-1)\rangle\cong T^3$.

Both $\HP^2$ and $S^6$ have vanishing odd degree cohomology hence are equivariantly formal. In this paper we prove

\begin{thm}\label{thmSphere}
Let the maximal compact torus $T^2\subset G_2$ act on $S^6=G_2/\SU(3)$ by the left multiplication. Then $S^6/T^2\cong S^4$.
\end{thm}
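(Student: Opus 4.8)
The plan is to describe the orbit space completely through an explicit set of $T^2$-invariants coming from the octonionic structure, and then to recognise the resulting model as a join of spheres. First I would set up the linear model. The sphere $S^6$ is the unit sphere in $\im\Oo\cong\Ro^7$, and $G_2$ acts on $\im\Oo$ through its $7$-dimensional fundamental representation. Restricting this representation to the maximal torus $T^2$, the weights are the zero weight (with multiplicity one) together with the six short roots of $G_2$. Hence there is a $T^2$-invariant splitting $\im\Oo\cong\Ro\oplus\Co\oplus\Co\oplus\Co$, where $\Ro$ is the fixed line and the three copies of $\Co$ are the weight planes attached to the short roots $\alpha_1,\alpha_2,\alpha_3$ (one representative from each antipodal pair). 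Writing a point of $S^6$ as $(t,z_1,z_2,z_3)$ with $t\in\Ro$, $z_j\in\Co$ and $t^2+|z_1|^2+|z_2|^2+|z_3|^2=1$, the two fixed points are $(\pm1,0,0,0)$, and the tangent weights at each are $\alpha_1,\alpha_2,\alpha_3$, which are pairwise independent, so the action is in general position.

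The crucial arithmetic input is that the three short roots satisfy exactly one linear relation, which after relabelling reads $\alpha_3=\alpha_1+\alpha_2$; moreover $\alpha_1,\alpha_2$ form a basis of the weight lattice of $T^2$ (this is special to $G_2$, whose root and weight lattices coincide). I would use this in two ways. First, it produces the monomial $w=z_1z_2\bar z_3\in\Co$, which is $T^2$-invariant because the corresponding character is $\chi_1\chi_2\chi_3^{-1}=1$. Second, because $\alpha_1,\alpha_2$ generate the lattice, the homomorphism $g\mapsto(\chi_1(g),\chi_2(g))$ is an isomorphism $T^2\toiso T^2$, so the torus acts transitively on the phases $(\arg z_1,\arg z_2)$ while fixing the combination $\arg z_1+\arg z_2-\arg z_3=\arg w$. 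Consequently the continuous map
\[
\mu\colon S^6\to\Ro\times\Rg^3\times\Co,\qquad \mu(t,z_1,z_2,z_3)=(t,|z_1|,|z_2|,|z_3|,\,z_1z_2\bar z_3),
\]
separates $T^2$-orbits: over the region where all $|z_j|>0$ the remaining freedom is exactly the single phase $\arg w$, and over each locus where some $z_j=0$ the torus already acts transitively on the surviving phases while $w=0$. I would verify orbit separation case by case according to how many coordinates $z_j$ vanish. Since $S^6$ is compact and the target is Hausdorff, $\mu$ then induces a homeomorphism of $S^6/T^2$ onto its image
\[
M=\{(t,r_1,r_2,r_3,w):t^2+r_1^2+r_2^2+r_3^2=1,\ r_j\geqslant0,\ |w|=r_1r_2r_3\}.
\]

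The final step is to identify $M$ with $S^4$. Projecting off $w$ gives a map $M\to\Delta$, where $\Delta=\{(t,r_1,r_2,r_3):t^2+r_1^2+r_2^2+r_3^2=1,\ r_j\geqslant0\}$; the fibre over a point with $r_1r_2r_3>0$ is the circle $\{|w|=r_1r_2r_3\}$ and the fibre over the boundary locus $\partial\Delta=\{\text{some }r_j=0\}$ is a single point. I would first check that $\Delta$ is a $3$-ball with $\partial\Delta\cong S^2$ (it is two spherical octants glued along their common round face $t=0$), and that the circle bundle over the interior is trivial via $w=r_1r_2r_3\,e^{i\psi}$. Thus $M$ is obtained from $D^3\times S^1$ by collapsing $\{x\}\times S^1$ to a point for every $x\in\partial D^3$. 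Writing $D^3$ as the cone on $S^2=\partial D^3$, this quotient is precisely the join $S^2*S^1$, which is homeomorphic to $S^{2+1+1}=S^4$.

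The main obstacle I anticipate is the orbit-separation step, namely the careful bookkeeping needed to confirm that $\mu$ is injective on orbits across all the strata where some $z_j$ vanish, together with pinning down the exact lattice statement that $\alpha_1,\alpha_2$ are a basis, so that no residual finite stabilizer obstructs transitivity on the phases. Once the invariant $w$ and this lattice fact are in hand, both the identification $S^6/T^2\cong M$ and the homeomorphism $M\cong S^4$ are essentially formal.
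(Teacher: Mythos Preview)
Your argument is correct, and it lands on the same final picture as the paper—$S^6/T^2$ obtained from $D^3\times S^1$ by collapsing the circle over $\partial D^3$, hence $S^2\ast S^1\cong S^4$—but reaches it by a different route. The paper first enlarges the action: it views $S^6\subset\Ro\oplus\Co^3$ with the obvious coordinatewise $T^3$-action, notes that $S^6/T^3$ is a $3$-disk (the ``rugby ball''), and then applies a general statement (Proposition~\ref{propDiskToSphereGen}) asserting that for any locally standard $T^n$-action with disk orbit space, restriction to a generic subtorus $T^{n-1}$ gives $X/T^{n-1}\cong S^{n+1}$. The remaining work is an explicit octonionic calculation (Proposition~\ref{propTwoActsOnS6}) identifying the maximal torus of $G_2$ with the subtorus $\{t_1t_2t_3=1\}\subset T^3$. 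You instead work directly with $T^2$-invariants: the moduli $|z_j|$ are the $T^3$-invariants, and your extra invariant $w=z_1z_2\bar z_3$ is precisely a coordinate on the residual circle $T^3/T^2$, so your construction implicitly recovers the $T^3$-extension. Your approach is more self-contained and makes the orbit-separation explicit; the paper's is shorter once the general machinery is in place, and that machinery is reused elsewhere. One cosmetic point: in the paper's normalisation the short-root relation is $\alpha_1+\alpha_2+\alpha_3=0$, so the invariant comes out as $z_1z_2z_3$ rather than $z_1z_2\bar z_3$; this is purely a sign convention on the weights and changes nothing.
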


This statement is proved by the technique, described in \cite{AyzCompl}. However, to prove the applicability of this technique, we need to describe explicitly a maximal torus of the group $G_2$ of automorphisms of the octonion algebra. This is done in Section \ref{secHalfDim}, where we recall the basic ideas of \cite{AyzCompl} concerning the restrictions of actions of complexity zero to the actions of complexity one. In Section \ref{secHalfDim} we also give an example of a non-equivariantly formal manifold with complexity one torus action, whose orbit space is however homeomorphic to a sphere. This example explains the nontriviality of Problem \ref{problMain}.

Our second result is the following.

\begin{thm}\label{thmHP}
Let a maximal compact torus $T^3\subset \Sp(3)$ act on $\HP^2=\Sp(3)/(\Sp(2)\times\Sp(1))$ by the left multiplication. Then
$\HP^2/T^3\cong S^5$.
\end{thm}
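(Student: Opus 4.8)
The plan is to realize $\HP^2/T^3$ as the orbit space of a complexity one action obtained by restricting a complexity zero (quasitoric-type) action, exactly as in Theorem \ref{thmSphere}, and then invoke the technique of \cite{AyzCompl}. The action of $T^3$ on $\HP^2=\Ho^3/\Ho^*$ by left multiplication on homogeneous coordinates has the three standard fixed points $[1:0:0]$, $[0:1:0]$, $[0:0:1]$, and the first thing I would do is compute the tangent weights at each of these. Since $\dim_\Ro \HP^2 = 8$ and the effective torus is $T^3$, the complexity is $\frac{1}{2}\cdot 8 - 3 = 1$, so this is indeed a complexity one action and Problem \ref{problMain} is in play. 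The key point, analogous to the $G_2$ case, is to check that the tangent weights at each fixed point are in general position (Definition \ref{defGenPosLocStdAct}); here the quaternionic structure makes each ``complex'' tangent weight appear with a conjugate partner, and one must track how the $\Zt=\langle(-1,-1,-1)\rangle$ kernel is quotiented out to keep the action effective.

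Concretely, I would first exhibit $\HP^2$ as a quotient or a subspace on which a larger torus $T^n$ acts with complexity zero, so that the given $T^3$ sits inside as a codimension-one subtorus $T^{n-1}\subset T^n$ in general position, putting us in the exact setting of the cited result ``$X^{2n}/T^{n-1}\cong S^{\ast}$ for quasitoric $X^{2n}$''. The natural candidate is to view $\HP^2$ together with its left $\Sp(3)$-action and to enlarge the left $T^3$-torus to include the right/diagonal circle actions coming from the quaternionic scalars, or alternatively to use the description of $\HP^2$ via the moment-map-like combinatorics of the quaternionic projective plane as a manifold with corners over a triangle. Once $\HP^2$ is identified (at the level of the orbit space) with a quasitoric manifold over the $2$-simplex $\Delta^2$ carrying a $T^3$-action of complexity zero, restricting to the complexity one subtorus and applying \cite{AyzCompl} yields that the orbit space is a sphere of the correct dimension, namely $S^5$ since $\dim(\HP^2/T^3)= 8-3 = 5$.

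The main obstacle, I expect, is precisely establishing that the $T^3$-action on $\HP^2$ fits the quasitoric/complexity-zero-restriction framework, because $\HP^2$ is genuinely a quaternionic object and is \emph{not} a toric or quasitoric manifold in the usual complex sense — there is no complexity zero \emph{complex} torus action on it. So rather than literally embedding into a quasitoric manifold, the honest route is to verify the general position hypothesis directly from the computed tangent weights and then run the orbit-space analysis of \cite{AyzCompl} on $\HP^2$ itself, checking that the combinatorial data (the arrangement of fixed points, edges, and weight data) matches that producing a sphere. The delicate bookkeeping is in the weight computation at the three fixed points and in confirming genericity after passing to the effective quotient torus $T^3/\langle(-1,-1,-1)\rangle$; once general position is secured, the conclusion $\HP^2/T^3\cong S^5$ follows from the same machinery that gives Theorem \ref{thmSphere}, now in the top dimension. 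I would close by remarking that this places $\HP^2$ alongside $G_{4,2}$ as an equivariantly formal witness for the affirmative direction of Problem \ref{problMain}.
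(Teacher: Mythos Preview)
Your strategy has a genuine gap: the machinery behind Theorem~\ref{thmSphere} and \cite{AyzCompl} that you want to invoke is Proposition~\ref{propDiskToSphereGen}, and that proposition requires an ambient \emph{complexity zero} torus action whose orbit space is a disk. For $S^6$ such a $T^3$-action exists and the $G_2$-torus sits inside it, which is exactly how Theorem~\ref{thmSphere} is proved. For $\HP^2$ you would need a locally standard $T^4$-action on an $8$-manifold with orbit space $D^4$, and you correctly observe that $\HP^2$ is not quasitoric and no such action is available. Your fallback---``verify general position and run the orbit-space analysis of \cite{AyzCompl}''---does not close the gap: general position guarantees only that the orbit space is a topological \emph{manifold}, not that it is a sphere. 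Indeed, the whole point of Problem~\ref{problMain} is that sphericity does \emph{not} follow from general position alone, and the paper exhibits explicit non-spherical examples. So after your weight computation you are left with exactly the statement you are trying to prove, and no tool to prove it. (Your remark about identifying $\HP^2$ with a quasitoric manifold over $\Delta^2$ carrying a complexity-zero $T^3$-action is also dimensionally inconsistent: a quasitoric manifold over a $2$-simplex is $4$-dimensional.)

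The paper's proof is entirely different and does not go through \cite{AyzCompl} at all. It exploits the double-quotient description $\HP^2/T^3 = T^3\backslash S^{11}/S^3$, uses the join decomposition $S^{11}\cong S^3\ast S^3\ast S^3$ together with the Hopf fibrations to rewrite this as $(\CP^1\ast\CP^1\ast\CP^1)/\SU(2)$, and then identifies the residual $\SU(2)$-action with the standard $\SO(3)$-action on $S^2\ast S^2\ast S^2\cong S^8$, i.e.\ on the space $Y_{3,3}$ of normalized $3$-tuples of vectors in $\Ro^3$. The conclusion $Y_{3,3}/\SO(3)\cong S^5$ is then a linear-algebra fact (Proposition~\ref{propNtuplesNspace}) proved via the polar decomposition and the convexity of the spectrohedron, in the spirit of Arnold's argument for~\eqref{eqArnoldThm}. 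None of this uses tangent weights, general position, or the quasitoric restriction technique.
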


This statement is related to the result of Arnold \cite[Example 4]{Arn} which asserts the homeomorphism 
\begin{equation}\label{eqArnoldThm}
\HP^2/T^1\cong S^7
\end{equation} 
The proof of Theorem \ref{thmHP} relies on the set of ideas, similar to those used by Arnold. In Section~\ref{secLinearAlgebra} we make some preparations related, in particular, to the notion of spectrohedron. Then, in Section \ref{secHP} we prove Theorem \ref{thmHP}, describe the equivariant skeleton of $\HP^2$ and show its connection to equivariant topology of the Grassmann manifold~$G_{4,2}$.

Next we recall the classical Kuiper--Massey theorem \cite{Kui,Mass} which asserts the homeomorphism $\CP^2/\conj\cong S^4$. Here
$\conj$ is the antiholomorphic involution on the complex projective plane, which conjugates all homogeneous coordinates
simultaneously. In \cite{Arn} Arnold discussed this theorem and noticed its closed relation to the homeomorphism \eqref{eqArnoldThm}. These two results were further extended by Atiyah and Berndt \cite{AtBer} who proved their octonionic version, namely, $\OP^2/\Sp(1)\cong S^{13}$, where $\Sp(1)=\SU(2)=S^3$ is the group of unit quaternions.

Note that Kuiper--Massey theorem is not a specific property of the complex projective plane. In Section \ref{secKuiperMassey} we recall
a generalization \cite{Fin} of Kuiper--Massey theorem, which asserts that $X/\conj\cong S^4$ for any smooth compact toric surface $X$. This result can be extended to quasitoric manifolds if one defines the involution $\conj$ in a natural way, see Proposition~\ref{propQtoricInvol}. Due to this observation we started to believe that Theorem \ref{thmHP} and Arnold's homeomorphism \eqref{eqArnoldThm} can also be extended to more general ``quaternionic surfaces''.

Following the work of Jeremy Hopkinson \cite{Hopk} we consider the class of quaternionic analogues of quasitoric manifolds. In \cite{Hopk} these spaces were called \emph{quoric} manifolds; we borrow this terminology. More specifically, we are interested in compact 8-dimensional manifolds, carrying the action of $S^3\times S^3$, which is locally standard in certain sense; and require the orbit space to be diffeomorphic to a polygon. This class of manifolds naturally contains the spaces $\HP^2$ (the $(S^3)^2$-orbit space is a triangle) and $\HP^1\times\HP^1$ (the $(S^3)^2$-orbit space is a square). A lot of care and a lot of preparatory work should be made only to define quoric manifolds and their basic properties, since the acting group is noncommutative: the intuition behind many aspects of quasitoric manifolds may fail in quaternionic case. This big work was done in detail in \cite{Hopk}. Since we don't have an opportunity to give all the definitions and statements, we only provide rough ideas of the constructions of quoric manifolds, and specifically restrict our attention to dimension~$8$.

Our observation, which seems to haven't been covered previously, is the following. We noticed that each $8$-dimensional quoric manifold carries an effective action of $T^3$ (not just the natural action of $T^2\subset S^3\times S^3$, which anyone would expect). Then we have the following generalization of Theorem \ref{thmHP}.

\begin{thm}\label{thmQuoricQuotients}
For any 8-dimensional quoric manifold, its orbit space by the $T^3$-action is homeomorphic to $S^5$.
\end{thm}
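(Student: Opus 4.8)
The plan is to prove that the orbit space $M^8/T^3$ is a closed, simply connected topological $5$-manifold with the integral homology of $S^5$, and then to conclude $M^8/T^3\cong S^5$ by the $5$-dimensional Poincaré conjecture. The argument generalizes the proof of Theorem \ref{thmHP}, of which the case $M^8=\HP^2$ is the anchoring instance.

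First I would record the $T^3$-action in the local charts of the quoric structure. Over the polygon $P=M^8/(S^3\times S^3)$ the manifold is assembled from charts modeled on $\Ho^2$, on which $(S^3)^2$ acts by left multiplication in each factor and the orbit projection is $(q_1,q_2)\mapsto(|q_1|^2,|q_2|^2)$ onto a corner of $P$. Writing $q_i=a_i+b_ij$ with $a_i,b_i\in\Co$, the torus $T^3=T^2\times S^1$ --- with $T^2\subset(S^3)^2$ the maximal torus acting on the left and the extra circle $S^1$ acting in these charts by the simultaneous right multiplication $(q_1,q_2)\mapsto(q_1\bar z,q_2\bar z)$ --- acts on $\Co^4=\{(a_1,b_1,a_2,b_2)\}$ with the four weights
\begin{equation}
(1,0,1),\qquad (1,0,-1),\qquad (0,1,1),\qquad (0,1,-1).
\end{equation}
Any three of these weights are linearly independent, so the representation is in general position in the sense of Definition \ref{defGenPosLocStdAct}; the origin is its only fixed point. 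Consequently the $T^3$-fixed points of $M^8$ are isolated (each arising over a vertex of $P$) and the action has complexity $\tfrac12\dim M^8-\dim T^3=1$.

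Next I would show that $M^8/T^3$ is a closed topological $5$-manifold. This is a purely local question, and for the weights above it is exactly the complexity-one general-position model treated in \cite{KTmain} and \cite{AyzCompl}: the linear quotient $\Co^4/T^3$ is a cone over $S^7/T^3\cong S^4$, hence a $5$-disk $D^5$ whose cone point is an interior locally Euclidean point. Patching these charts over the interior, the edges and the vertices of $P$ exhibits $M^8/T^3$ as a closed $5$-manifold without boundary. Since $T^3$ is connected and a quoric manifold over a disk is simply connected, the orbit space $M^8/T^3$ is simply connected as well.

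The main obstacle is the homological step: to prove $H_*(M^8/T^3;\Zo)\cong H_*(S^5;\Zo)$ for \emph{every} polygon $P$ and \emph{every} admissible characteristic datum. I would filter $M^8/T^3$ by the preimages of the skeleta of $P$ and analyze the resulting spectral sequence (equivalently, a Mayer--Vietoris patching over the faces), using that the principal fibre of $M^8/T^3\to P$ is $(S^3\times S^3)/T^3\cong(S^2\times S^2)/S^1$ and that it degenerates over edges and vertices precisely as in the local model above. Equivalently, one may view $M^8/T^3$ as a union of the disks $\Co^4/T^3\cong D^5$ over the vertices, glued along the $S^4$-pieces sitting over the edges, and check that this assembly is a homotopy sphere. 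The quoric manifold $M^8$ has vanishing odd cohomology and is therefore equivariantly formal, which should let me transport the cohomological bookkeeping from the case $M^8=\HP^2$ of Theorem \ref{thmHP} to the general case. The step demanding the most care is verifying that the assembled orbit space is a homology sphere independently of the combinatorial type of $P$ (a triangle for $\HP^2$, a square for $\HP^1\times\HP^1$, and arbitrary polygons otherwise) and of the characteristic function; once this is secured, the Poincaré conjecture completes the proof.
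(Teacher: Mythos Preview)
Your proposal is a plan rather than a proof: the decisive step --- computing $H_*(M^8/T^3;\Zo)$ --- is left as ``I would filter \ldots'' and ``check that this assembly is a homotopy sphere'', with the admission that this is ``the step demanding the most care''. Nothing in the outline actually performs that computation, and the invocation of equivariant formality does not by itself yield the homology of the orbit space. There is also a local oversight: quoric charts are modelled on \emph{two} inequivalent standard $(S^3)^2$-actions on $\Ho^2$ (see \eqref{eqStdActions1} and \eqref{eqStdActions2}), and the induced $T^3$-representation at a vertex can have either the weight set you wrote or the set $(1,-1,0),(1,1,0),(0,1,1),(0,1,-1)$; both must be checked. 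Finally, your argument does not ``generalize the proof of Theorem~\ref{thmHP}'': that proof goes through $S^8/\SO(3)$ and spectrohedra, not through a homology calculation plus Poincar\'e.

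The paper avoids all of this by a direct fibrewise argument over the polygon $P^2$. One looks at the projection $p\colon X^8_{(P^2,\Lambda)}/T^3\to P^2$ and identifies the fibres: over an interior point the fibre is $(S^3\times S^3)/T^3\cong S^3$ (proved by the elementary Lemma~\ref{lemS3S3toS3}, using that $(S^3)^2/T^4\cong D^1\times D^1$ and then Proposition~\ref{propDiskToSphereGen}); over an edge point the fibre is $S^3/T^2\cong D^1$ (Lemma~\ref{lemS3toInterval}); over a vertex it is a point. Thus $X^8_{(P^2,\Lambda)}/T^3$ is obtained from $P^2\times S^3$ by collapsing the $S^3$-factor to contractible pieces over $\partial P^2$, which is $S^5$ on the nose --- no Poincar\'e conjecture, no spectral sequence, and no dependence on the combinatorics of $P^2$ or on~$\Lambda$. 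If you want to salvage your approach, the cleanest fix is precisely this: compute the generic fibre $(S^3)^2/T^3$ directly and observe the degeneration over $\partial P^2$.
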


However, while the quotient $\HP^2/T^3$ can be understood geometrically by gluing two copies of 5-dimensional spectrohedra along their boundaries (see Proposition \ref{propNtuplesNspace}), the quotients of quoric manifolds for now lack such a description. This situation is similar to the generalization of Kuiper--Massey theorem: while the quotient $\CP^2/\conj$ can be understood as the boundary of 5-dimensional spectrohedron, the quotient spaces $X/\conj$ of general toric surfaces do not have a description in terms of convex geometry. We believe that both toric and quoric surfaces require further study in the same context.


\section{Reductions of half-dimensional actions}\label{secHalfDim}

For a smooth action of $G$ on a smooth manifold $X$ define the
partition of $X$ by orbit types
\[
X=\bigsqcup_{H\in S(G)}X^H.
\]
Here $H$ runs over all closed subgroups of $G$ and
$X^H=\tilde{\lambda}^{-1}(H)=\{x\in X\mid G_x=H\}$.

\begin{defin}
An effective action of $G$ on a compact smooth manifold $X$ is
called \emph{appropriate} if
\begin{itemize}
\item the fixed points set $X^G$ is finite;
\item (adjoining condition) the closure of every connected component of a
fine partition element $X^H$, $H\neq G$, contains a point $x'$ with
$\dim G_{x'}>\dim H$.
\end{itemize}
\end{defin}

In the following we assume that all actions are effective and appropriate.

Assume $T^k$ acts smoothly on a manifold $X=X^{2n}$, and let $x$
be an isolated fixed point. The representation of the torus in the
tangent space $T_xX$ decomposes into the sum of 2-dimensional real representations
\[
T_xX=V(\alpha_1)\oplus\cdots\oplus V(\alpha_n),
\]
where $\alpha_1,\ldots,\alpha_n\in\Hom(T^k,T^1)\cong \Zo^k$, and
$V(\alpha_i)$ it the representation $tv=\alpha(t)\cdot v$ for $v\in\Co\cong\Ro^2$.
In general the weights $\alpha_1,\ldots,\alpha_n$
are defined uniquely up to sign. One needs to impose a stably complex structure on $X$ for the
weights to be defined without sign ambiguity. However, in the following we
do not need stably complex structure; all definitions are invariant under the change of
signs.

\begin{defin}\label{defGenPosLocStdAct}
An appropriate effective action of $T^n$ on $X^{2n}$ is called locally standard
if the weights of the tangent representation at each fixed point form a basis of the lattice
$\Hom(T^n,T^1)\cong\Zo^n$. An appropriate effective action of $T^{n-1}$ on $X^{2n}$ is called
a complexity one action in general position if, for any fixed point, any $n-1$ of $n$ tangent weights $\alpha_1,\ldots,\alpha_n$ are linearly independent.
\end{defin}

The orbit space of any locally standard action is a manifold with corners (the appropriate condition
means that any face of this manifold has a vertex). It is easy to show that the orbit
space of a complexity one action in general position is a topological manifold (see~\cite[Thm.2.10]{AyzCompl}).

\begin{prop}\label{propDiskToSphereGen}
Consider a locally standard action of $T^n$ on $X=X^{2n}$, whose orbit space
is homeomorphic to a disk $D^n$. Assume that $T^{n-1}\subset T^n$ is a subtorus such that
the induced action of $T^{n-1}$ on $X$ is in general position. Then $X/T^{n-1}$ is
homeomorphic to a sphere $S^{n+1}$.
\end{prop}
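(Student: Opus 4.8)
The plan is to understand the quotient map $X/T^{n-1}$ by first passing through the intermediate quotient by the full torus $T^n$, and then analyzing the fibers of the residual circle action. Since the $T^n$-action is locally standard with orbit space $D^n$, the quotient $X/T^n \cong D^n$ is a manifold with corners, and the subtorus $T^{n-1}\subset T^n$ gives a residual circle $T^n/T^{n-1}\cong T^1$ acting on $X/T^{n-1}$ with quotient $(X/T^{n-1})/T^1 \cong X/T^n \cong D^n$. So $X/T^{n-1}$ fibers as a kind of ``twisted'' circle bundle over $D^n$, degenerating over the boundary strata in a way dictated by the isotropy data. The first step is to set up this projection $\pi\colon X/T^{n-1}\to D^n$ carefully and identify its fibers over each face of $D^n$.

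Next I would analyze the collapsing pattern of the circle over the faces of the disk. Over the interior of $D^n$, where the $T^n$-action is free, the fiber of $\pi$ is a full circle $T^1$. Over a codimension-$j$ face $F$, the $T^n$-isotropy is a rank-$j$ subtorus $T_F$, and the fiber of $\pi$ over a point of $F$ is the quotient circle $T^1/(T^1\cap T_F)$ together with whatever identifications the general-position hypothesis forces; the general-position condition (any $n-1$ of the $n$ tangent weights are linearly independent) is precisely what guarantees that over each face the residual circle either survives or collapses to a point in a controlled manner, so that no higher-dimensional tori appear as fibers and the total space stays a manifold. Concretely I expect that over the relative interior of each facet (codimension one) the circle collapses to a point, while over higher-codimension faces it survives, so that $X/T^{n-1}$ is obtained from $D^n\times T^1$ by collapsing the circle over the boundary $\partial D^n \cong S^{n-1}$ in a prescribed way.

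The clean way to organize this is to exhibit $X/T^{n-1}$ as the union of two pieces glued along their common boundary: the preimages of two complementary regions of $D^n$, each of which is a disk $D^{n+1}$, glued along an $S^n$. I would try to show directly that collapsing the $T^1$-fiber over $\partial D^n = S^{n-1}$ inside $D^n \times T^1$ produces $S^{n+1}$, using the homeomorphism type $D^n\times T^1 / (\partial D^n\times T^1 \to \partial D^n)$ and recognizing the result as the suspension-type construction, or by directly building the two-disk decomposition. The general-position hypothesis enters to guarantee that the collapsing is the ``standard'' one and that the result is genuinely a sphere rather than some non-simply-connected or singular quotient; combined with the earlier-cited fact that $X/T^{n-1}$ is already known to be a topological manifold, one then identifies its homeomorphism type as $S^{n+1}$.

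The main obstacle will be making the fiberwise collapsing argument rigorous near the boundary and corners of $D^n$: one must verify, using the local standardness and the general-position condition together, that the local model of $\pi$ over each face is exactly the expected quotient of $\Co^n$ (or $\Ro^{2n}$) by the subtorus, and that these local models patch together to give the global two-disk decomposition. In particular, checking that the circle genuinely collapses over every facet, and that the fiber over the full-dimensional corner behaves correctly, requires a careful local analysis of the weights; this weight bookkeeping, guaranteed finite and consistent by the general-position hypothesis, is where the real work lies.
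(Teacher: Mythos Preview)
Your overall strategy matches the paper's: study the residual $T^1=T^n/T^{n-1}$ action on $X/T^{n-1}$ via the projection $p\colon X/T^{n-1}\to X/T^n\cong D^n$, identify the fibers, and recognize the resulting identification space as a sphere.

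There is, however, a concrete error in your fiber analysis. You write that over higher-codimension faces the circle ``survives''. In fact it collapses to a point over \emph{every} boundary point of $D^n$. Here is why. Write $T^{n-1}=\ker\phi$ for a primitive character $\phi\colon T^n\to T^1$. At a vertex of $D^n$ the $T^n$-weights $\alpha_1,\ldots,\alpha_n$ form a basis, and the $T^{n-1}$-weights are their restrictions $\beta_i=\alpha_i|_{T^{n-1}}$. A subset $\{\beta_i:i\neq j\}$ is linearly independent in $\Zo^{n-1}$ if and only if $\phi$ does not vanish on the $j$-th coordinate circle $T_{F_j}$ (the stabilizer of the facet $F_j$). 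Hence the general-position hypothesis says precisely that $T_{F}\not\subset T^{n-1}$ for every facet $F$ through that vertex; by appropriateness every facet contains a vertex, so this holds for all facets. Thus $T^{n-1}\cdot T_F=T^n$ and the $p$-fiber over each facet is a point. For a higher-codimension face $G\subset F$ we have $T_G\supset T_F$, so $T^{n-1}\cdot T_G=T^n$ as well, and the fiber over $G$ is again a point --- not a circle.

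Once this is corrected, the picture is much simpler than you anticipate: the fiber is $S^1$ over the interior of $D^n$ and a single point over all of $\partial D^n$. So $X/T^{n-1}$ is $D^n\times S^1$ with $\{x\}\times S^1$ collapsed for each $x\in\partial D^n$, which is the join $S^{n-1}\ast S^1\cong S^{n+1}$. The ``careful local analysis near the corners'' you flag as the main obstacle is not needed; the correct fiber computation already resolves it.
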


\begin{proof}
In \cite{AyzCompl} this fact was proved in the case when $X/T^n$ is a simple polytope. The
general statement is completely similar, so we only give an idea of the proof. Consider the induced map
\[
p\colon X/T^{n-1}\to X/T^n\cong D^n.
\]
The map $p$ is the projection to the orbit space, for the residual action of $T^1=T^n/T^{n-1}$ on~$X/T^{n-1}$. Interior points of~$D^n$ correspond to free orbits, while the boundary points
of $D^n$ correspond to point orbits. Hence $X/T^{n-1}$ is the identification space $D^n\times S^1$,
where $S^1$ is collapsed over the boundary $\dd D^n$. This yields the result.
\end{proof}

\begin{con}\label{conT2actsS6}
Consider the standard $T^3$-action on $S^6$, given by
\begin{equation}\label{eqStandS6}
(t_1,t_2,t_3)(r,z_1,z_2,z_3)=(r,t_1z_1,t_2z_2,t_3z_3),
\end{equation}
where $S^6=\{r\in \Ro, z_i\in\Co\mid r^2+|z_1|^2+|z_2|^2+|z_3|^2=1\}$. The
orbit space
\[
S^6/T^3=\{(r,c_1,c_2,c_3)\in\Ro\times\Rg^3\mid r^2+c_1^2+c_2^2+c_3^2=1\}
\]
is a manifold with two corners (sometimes called a ``rugby ball'') homeomorphic to a 3-disk. The subtorus
\[
T^2=\{(t_1,t_2,t_3)\subset T^3\mid t_1t_2t_3=1\}
\]
acts on $S^6$ in general position. Hence, Proposition \ref{propDiskToSphereGen} implies
\begin{equation}\label{eqS6constructionQuot}
S^6/T^2\cong S^4.
\end{equation}
\end{con}

This formula already looks like the proof of Theorem \ref{thmSphere}. However
we need to check that the action of $T^2\subset T^3$ on $S^6$ defined by Construction \ref{conT2actsS6}
is the same as the action in Theorem \ref{thmSphere}.

\begin{prop}\label{propTwoActsOnS6}
The constructed action of $T^2$ on $S^6$ coincides with
the action of a maximal torus $T^2\subset G_2$ on the sphere $G_2/\SU(3)$ of unit imaginary octonions.
\end{prop}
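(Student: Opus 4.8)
The plan is to identify the $T^2$-action of Theorem \ref{thmSphere} with the restriction to a maximal torus of the standard $\SU(3)$-action on $\Co^3$, and then to recognize the latter inside Construction \ref{conT2actsS6}. I realize $S^6$ as the unit sphere in the $7$-dimensional space $\im\Oo$ of imaginary octonions, on which $G_2=\operatorname{Aut}(\Oo)$ acts by restriction of algebra automorphisms; such automorphisms preserve the orthogonal splitting $\Oo=\Ro\oplus\im\Oo$ and the norm. First I would locate a fixed point: the real $7$-dimensional representation of $G_2$ on $\im\Oo$, restricted to a maximal torus $T^2$, has weights the six short roots of $G_2$ together with a single zero weight, so the $T^2$-fixed locus on $S^6$ is a pair of antipodal points $\{\pm u\}$. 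Fixing such a unit imaginary octonion $u$, we get $T^2\subseteq\operatorname{Stab}_{G_2}(u)$.

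The second step is the classical identification $\operatorname{Stab}_{G_2}(u)\cong\SU(3)$. Since the octonions are alternative, $u(ux)=(uu)x=-x$ for $x\perp u$, so left multiplication $J:=L_u$ is a complex structure on the $6$-dimensional space $W:=u^\perp\cap\im\Oo$, giving $W\cong\Co^3$. An automorphism $\phi$ fixing $u$ satisfies $\phi(ux)=u\,\phi(x)$, hence commutes with $J$ and is complex linear on $W$; as it also preserves the Euclidean norm we get $\phi|_W\in U(3)$, and compatibility with the full octonionic product cuts this down to $\SU(3)$ in its standard representation. Because $\rk\SU(3)=2=\rk G_2$, the torus $T^2$ is a maximal torus of this $\SU(3)$, so in a suitable unitary basis $w_1,w_2,w_3$ of $W$ it is the diagonal subgroup $\{\diag(\lambda_1,\lambda_2,\lambda_3)\mid\lambda_1\lambda_2\lambda_3=1\}$ acting on $\Co^3=W$ and fixing $\Ro u$ pointwise.

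Putting the two steps together, $T^2$ acts on $\im\Oo=\Ro u\oplus W\cong\Ro\oplus\Co^3$ by $(r,z_1,z_2,z_3)\mapsto(r,\lambda_1 z_1,\lambda_2 z_2,\lambda_3 z_3)$ with $\lambda_1\lambda_2\lambda_3=1$, which is precisely the subtorus $T^2=\{t_1t_2t_3=1\}\subset T^3$ of Construction \ref{conT2actsS6} acting on $S^6=\{(r,z_1,z_2,z_3)\}$. To honour the word ``explicitly'' I would exhibit the basis $w_1,w_2,w_3$ coming from a concrete octonion multiplication table (e.g. via the Cayley--Dickson doubling $\Oo=\Ho\oplus\Ho\ell$ or the Fano-plane relations) and write the torus elements as the genuine $\Ro$-linear maps $w_i\mapsto\cos\theta_i\,w_i+\sin\theta_i\,(uw_i)$, $ru\mapsto ru$, with $\lambda_i=e^{\sqrt{-1}\theta_i}$.

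The main obstacle is verifying that these phase rotations really are octonion automorphisms and seeing that the constraint $\lambda_1\lambda_2\lambda_3=1$ is exactly what this verification forces. The key structural fact is that the octonion product restricted to $W$ realizes the $\SU(3)$-invariant cross product on $\Co^3$: the basis can be chosen so that the $W$-component of $w_iw_j$ is a multiple of $\overline{w_k}$ for $\{i,j,k\}=\{1,2,3\}$, a vector of weight $\overline{\lambda_k}$. Applying the automorphism equation $\phi(w_iw_j)=\phi(w_i)\phi(w_j)$ to this relation yields $\lambda_i\lambda_j=\overline{\lambda_k}$, i.e. $\lambda_1\lambda_2\lambda_3=1$. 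Thus the defining relation $t_1t_2t_3=1$ of Construction \ref{conT2actsS6} is nothing but the condition for the diagonal rotations to respect octonion multiplication, which is what identifies the two actions and proves the proposition.
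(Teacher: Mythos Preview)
Your argument is correct and takes a genuinely different route from the paper's. The paper proceeds by brute force: it fixes the standard basis $1,l,i,j,k,il,jl,kl$ of $\Oo$, writes down explicit maps $\sigma_{\alpha,\beta,\gamma}$ sending $i,j,k$ to $e^{\alpha l}i,e^{\beta l}j,e^{\gamma l}k$ with $\alpha+\beta+\gamma=0$, verifies by direct computation that these are algebra automorphisms, and then displays the resulting $8\times 8$ matrix in the basis $1,l,i,il,j,jl,k,kl$ to read off that the action on imaginary octonions is exactly the restriction of the standard $T^3$-action on $S^6$ to the subtorus $\{t_1t_2t_3=1\}$. Your approach is structural rather than computational: you locate a $T^2$-fixed unit imaginary octonion $u$ via the weight decomposition of the $7$-dimensional representation, invoke the classical identification $\operatorname{Stab}_{G_2}(u)\cong\SU(3)$ acting on $(u^\perp\cap\im\Oo,L_u)\cong\Co^3$, and then recover the constraint $\lambda_1\lambda_2\lambda_3=1$ from the $\SU(3)$-invariant cross product on $\Co^3$. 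The paper's version is entirely self-contained and requires nothing beyond the multiplication table, at the cost of an unilluminating check; your version explains \emph{why} the relation $t_1t_2t_3=1$ must appear (it is the determinant-one condition inside the visible $\SU(3)$), but leans on background facts about $G_2$ and the octonionic cross product that, to make the proof complete, you would in the end verify with the same multiplication table the paper uses.
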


\begin{figure}[h]
\begin{center}
\includegraphics[scale=0.3]{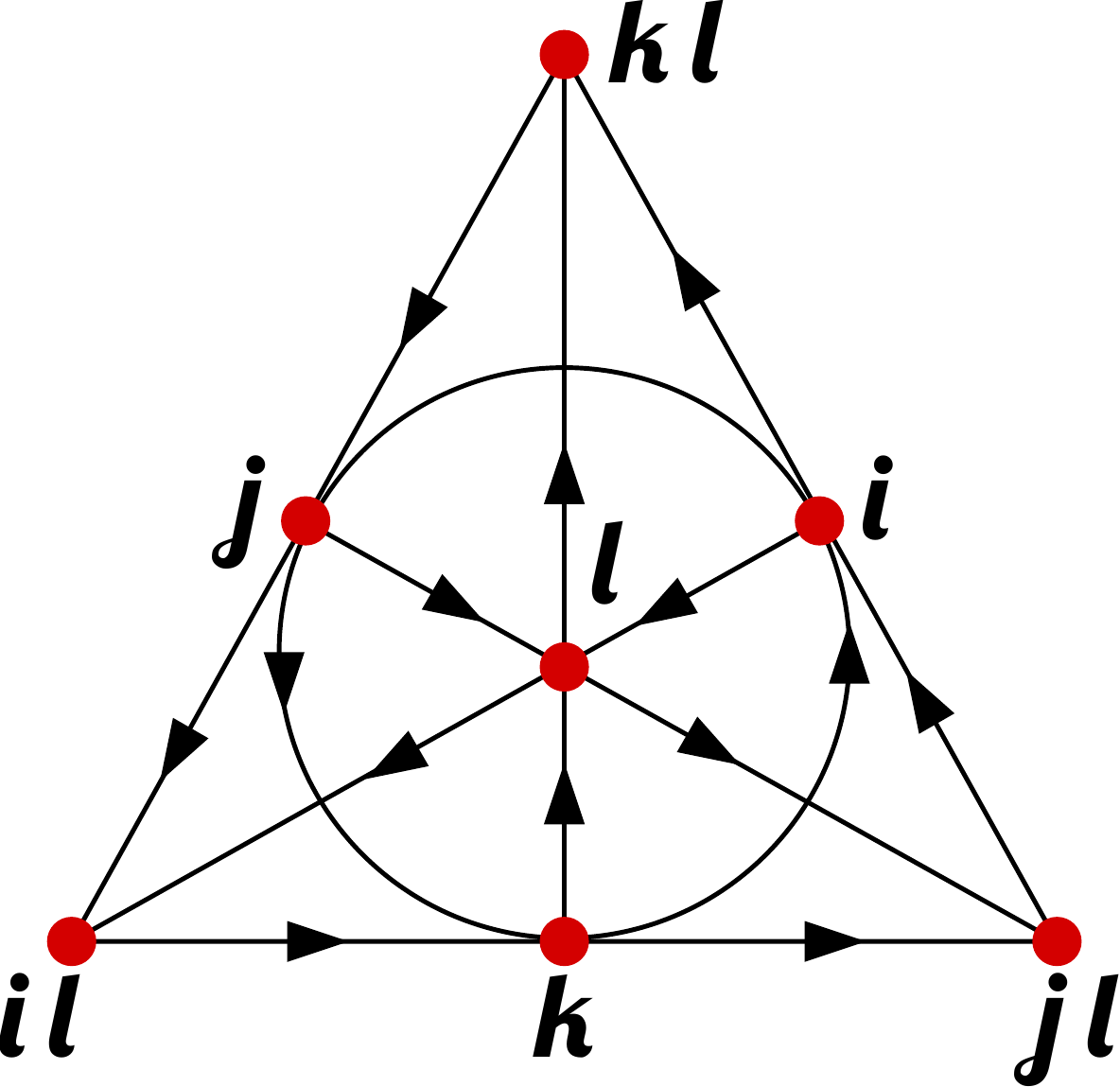}
\end{center}
\caption{Fano plane model for the multiplication of octonionic imaginary units}\label{figOctonionsMult}
\end{figure}

\begin{proof}
Let $1,l,i,j,k,il,jl,kl$ be the standard basis of the octonion algebra $\Oo$ over $\Ro$, with
the multiplication given by the standard mnemonic diagram shown of Fig.\ref{figOctonionsMult}. For each $\alpha,\beta,\gamma\in\Ro/2\pi\Zo$,
$\alpha+\beta+\gamma=0$ consider the automorphism $\sigma_{\alpha,\beta,\gamma}$ of $\Oo$ given by
\[
\sigma_{\alpha,\beta,\gamma}(1)=1;\quad\sigma_{\alpha,\beta,\gamma}(l)=l;\quad \sigma_{\alpha,\beta,\gamma}(i)=e^{\alpha l}i;\quad
\sigma_{\alpha,\beta,\gamma}(j)=e^{\beta l}j;\quad \sigma_{\alpha,\beta,\gamma}(k)=e^{\gamma l}k.
\]
Here we use the notation $e^{\phi \epsilon}=\cos\phi+\epsilon\sin\phi$ for any imaginary
unit $\epsilon=i,j,k$ and $\phi\in\Ro/2\pi\Zo$. A direct calculation shows that $\sigma_{\alpha,\beta,\gamma}$
is indeed an automorphism of $\Oo$. Moreover, we have
\[
\sigma_{\alpha_1+\alpha_2,\beta_1+\beta_2,\gamma_1+\gamma_2}=\sigma_{\alpha_1,\beta_1,\gamma_1}\circ\sigma_{\alpha_2,\beta_2,\gamma_2}.
\]
Hence the torus $T^2_\sigma=\{\sigma_{\alpha,\beta,\gamma}\mid \alpha+\beta+\gamma=0\}$ is
a maximal torus of $G_2$ (it is well known that $G_2$ has rank two, so that the torus cannot have larger dimension). Let us write the automorphism $\sigma_{\alpha,\beta,\gamma}\colon \Ro^8\to\Ro^8$ in the matrix form. In
the basis $1,l,i,il,j,jl,k,kl$ we have
\[
\sigma_{\alpha,\beta,\gamma}=\begin{pmatrix}
  1 &  &  &  &  &  &  &  \\
   & 1 &  &  &  &  &  &  \\
   &  & \cos\alpha & \sin\alpha &  &  &  &  \\
   &  & -\sin\alpha & \cos\alpha &  &  &  &  \\
   &  &  &  & \cos\beta & \sin\beta &  &  \\
   &  &  &  & -\sin\beta & \cos\beta &  &  \\
   &  &  &  &  &  & \cos\gamma & \sin\gamma \\
   &  &  &  &  &  & -\sin\gamma & \cos\gamma
\end{pmatrix}
\]
where void spaces denote zeroes. Since $\alpha+\beta+\gamma=0$, we see that the action of $T^2_\sigma$ on
the $6$-sphere of unit imaginary octonions is exactly the restriction of the standard action \eqref{eqStandS6} of
$T^3$ on $S^6$ to the subtorus $T^2=\{(t_1,t_2,t_3)\in T^3\mid t_1t_2t_3~=~1\}$.
\end{proof}

Proposition \ref{propTwoActsOnS6} and \eqref{eqS6constructionQuot} imply Theorem \ref{thmSphere}.

Now we use the arguments from the beginning of this section to show the nontriviality of Problem \ref{problMain}.

\begin{prop}
There exists a manifold $X=X^{2n}$ with an appropriate complexity one action of $T^{n-1}$ in general position such that the orbit space $X/T^{n-1}$ is homeomorphic to $S^{n+1}$, however the action is not equivariantly formal.
\end{prop}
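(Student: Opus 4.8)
The plan is to produce an explicit example by restricting a complexity-zero torus action on a manifold that is \emph{not} equivariantly formal, while retaining general position of the restricted weights. The natural source of non-formality is odd-degree cohomology, so I would look for a locally standard $T^n$-action on some $X^{2n}$ whose orbit space is a disk $D^n$ (so that Proposition~\ref{propDiskToSphereGen} applies and forces the orbit space of the restricted action to be a sphere) but where $X$ itself carries nontrivial odd cohomology and thus cannot satisfy the degeneration of \eqref{eqSerreSeq} at $E_2$.

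The concrete construction I would pursue is to take a product, or a suitable bundle, in which one factor is a quasitoric-type manifold with $D^k$ orbit space and the other factor injects odd cohomology. The cleanest candidate is to multiply a locally standard action with a trivial action on an odd sphere, or better, to build $X$ as the total space of an $S^1$- or $S^3$-bundle so that $H^{\odd}(X)\neq 0$ while the orbit space by the full torus remains a disk. First I would exhibit the manifold $X$ together with its $T^n$-action explicitly and verify that the action is appropriate, effective, and locally standard, with $X/T^n\cong D^n$; this is the input required by Proposition~\ref{propDiskToSphereGen}. Second, I would choose a subtorus $T^{n-1}\subset T^n$ and check that the induced action is in general position in the sense of Definition~\ref{defGenPosLocStdAct}, namely that at each fixed point any $n-1$ of the $n$ tangent weights are linearly independent; by Proposition~\ref{propDiskToSphereGen} this immediately yields $X/T^{n-1}\cong S^{n+1}$.

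The third and decisive step is to verify that $X$ is \emph{not} equivariantly formal. Here I would compute $H^*(X)$ and exhibit nonzero cohomology in odd degree, since for equivariantly formal spaces the $T$-action forces the spectral sequence \eqref{eqSerreSeq} to collapse, and in the presence of isolated fixed points this is incompatible with nonvanishing odd cohomology (the total dimension of $H^*(X)$ would have to equal the number of fixed points, all sitting in even degree). Concretely I would argue that $\dim_\Qo H^{\odd}(X)>0$ while the fixed point set is finite, contradicting equivariant formality.

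The main obstacle will be arranging all three requirements \emph{simultaneously}: local standardness with a disk orbit space tends to push toward manifolds with only even cohomology (quasitoric and toric manifolds are exactly the equivariantly formal ones), so the construction must deliberately break this by introducing odd cohomology without destroying either the disk structure of $X/T^n$ or the general-position property after restriction. I expect the resolution to come from the isospectral periodic tridiagonal matrix manifolds of \cite{AyzMatr} mentioned in the introduction, or from an analogous bundle construction, where non-formality is already established; the work then reduces to checking that such an example simultaneously has a disk orbit space under an enlarged locally standard action so that Proposition~\ref{propDiskToSphereGen} can be invoked to certify sphericity of the quotient.
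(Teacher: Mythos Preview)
Your overall strategy---find a locally standard $T^n$-action whose orbit space is a disk but which carries nonzero odd cohomology, then restrict to a generic $T^{n-1}$---matches the paper's approach, and your argument that odd cohomology together with isolated fixed points obstructs equivariant formality (via localization) is essentially the paper's final step. The genuine gap is in the construction of $X$: none of your candidates work. A product with a trivial action on an odd sphere loses isolated fixed points. A ``bundle construction'' is not a construction until you specify it. And the isospectral tridiagonal spaces of \cite{AyzMatr} are exactly the examples the introduction cites as having orbit spaces that are \emph{not} spheres; in particular, by the contrapositive of Proposition~\ref{propDiskToSphereGen}, their complexity-one action cannot extend to a locally standard $T^n$-action over a disk, so your proposed ``check'' would fail.

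The paper supplies the missing mechanism: it builds a manifold with corners $P^n$ homeomorphic to $D^n$ that is nonetheless \emph{not} a homology polytope (some facet is non-acyclic), by taking connected sums of simple polytopes along points lying in the relative interiors of lower-dimensional faces. Concretely, $S^6\#_{T^2}S^6$, with the $T^3$-action inherited from each summand, sits over the connected sum of two rugby balls along facet interiors; the orbit space is a $3$-disk, but one facet is an annulus. Masuda--Panov then forces $H^{\odd}(X)\neq 0$, while Proposition~\ref{propDiskToSphereGen} still yields $X/T^{n-1}\cong S^{n+1}$. This connected-sum-along-faces idea (taken from \cite{SarSten}) is the ingredient your proposal is missing; once you have it, the rest of your outline goes through.
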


\begin{proof}
In view of Proposition \ref{propDiskToSphereGen}, the idea of the following construction is very simple: we construct a non-equivariant manifold with half-dimensional torus action, whose orbit space is a disk. For locally standard torus action of $T^n$ on $X^{2n}$ we have a result of Masuda and Panov \cite{MasPan} which asserts the equivalence of the following conditions:
\begin{itemize}
  \item the action of $T^n$ on $X^{2n}$ is equivariantly formal;
  \item the orbit space $X^{2n}/T^n$ is a homology polytope (i.e. all its faces are $\Zo$-acyclic);
  \item $H^{\odd}(X^{2n})=0$.
\end{itemize}

\begin{lem}[{see \cite{SarSten}}]\label{lemBadDisk}
There exists a manifold with corners $P^n$ which satisfies the following properties
\begin{enumerate}
  \item Every face of $P^n$ has a vertex;
  \item $P^n$ is homeomorphic to a disk;
  \item $P^n$ is not a homology polytope;
  \item $P^n$ is the orbit space of some locally standard torus action.
\end{enumerate}
\end{lem}

\begin{proof}
The example of such manifold with corners is shown on Fig.\ref{figTube}: it is homeomorphic
to 3-disk, however one of its facets is non-acyclic. We describe the procedure which allows
to construct many other examples. The details of this procedure can be found in \cite{SarSten}.

\begin{figure}[h]
\begin{center}
\includegraphics[scale=0.3]{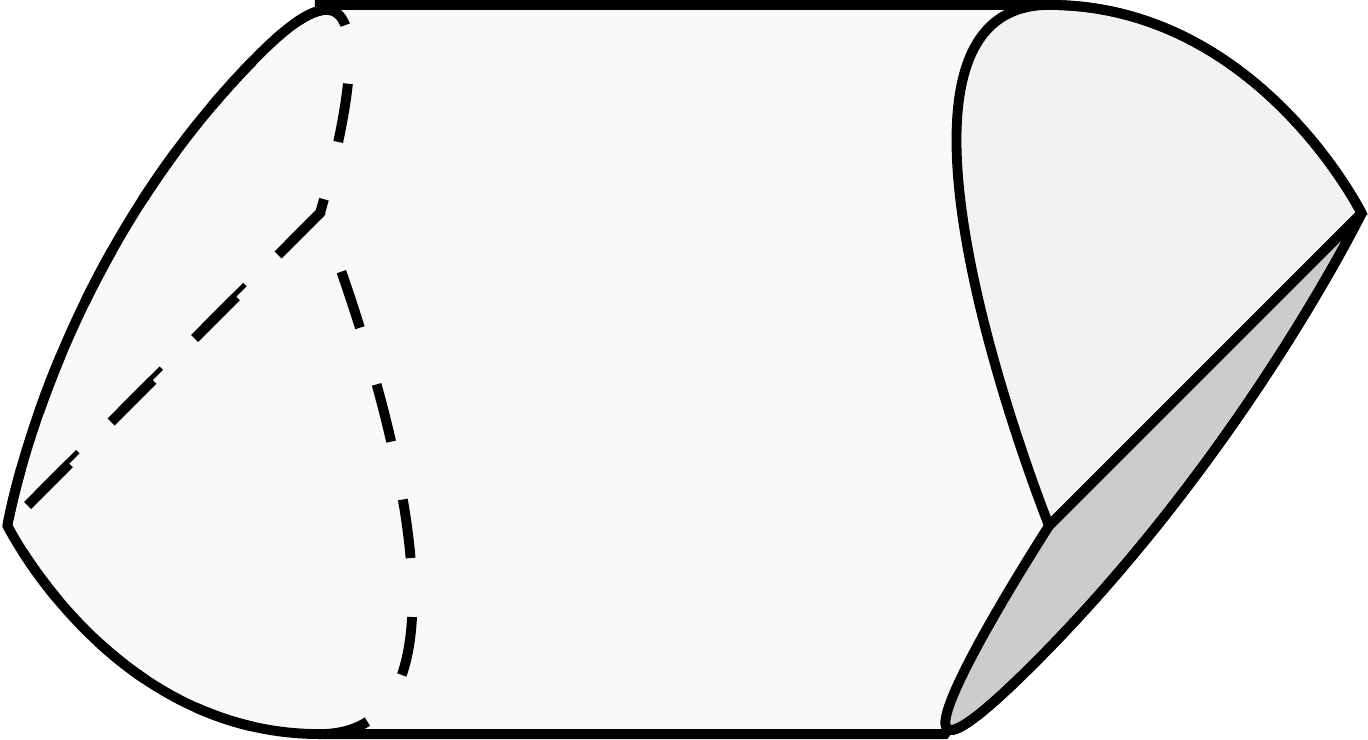}
\end{center}
\caption{A manifold with corners $P^3$, satisfying the properties of Lemma \ref{lemBadDisk}. It is obtained by taking a connected sum of two rugby balls along points in the interior of their facets.}\label{figTube}
\end{figure}

Take any two simple polytopes $P_1^n$ and $P_2^n$, with $n\geqslant 3$. Let $2\leqslant k<n$, and
$x_1$ (resp. $x_2$) be a point lying in the interior of some $k$-dimensional face of $P_1^n$ (resp. $P_2^n$). 
One can form a connected sum $P^n=P_1^n\#_{x_1,x_2}P_2^n$ of manifolds with corners $P_1^n$ and $P_2^n$ along the points $x_1,x_2$. 
The conditions 1--3 are easily checked for $P^n$. Now, if both $P_1^n$ and $P_2^n$ are the orbit spaces of some locally 
standard manifolds $X_1$ and $X_2$, then $P^n$ is the orbit space of the manifold $X_1\#_{T^k}X_2$, where the connected 
sum is taken along some $k$-dimensional torus orbit. For example, the manifold with corners shown on Fig.\ref{figTube} 
is the orbit space of the manifold $S^6\#_{T^2}S^6$ carrying the locally standard action of $T^3$. We refer the reader to \cite{SarSten} where these manifolds were studied in detail.
\end{proof}

If $P^n$ is a manifold with corners, given by Lemma \ref{lemBadDisk}, and $X$ is a torus manifold over $P^n$, then Proposition \ref{propDiskToSphereGen} implies that $X/T^{n-1}\cong S^{n+1}$ for any subtorus $T^{n-1}\subset T^n$ in general position. On the other hand, the result of Masuda and Panov implies that $X$ is not equivariantly formal with respect to the action of $T^n$. However, we shall check that $X$ is not equivariantly formal with respect to the action of $T^{n-1}$ as well.

The latter fact is the consequence of the localization theorem. Indeed, assume the converse, i.e. $X$ is equivariantly formal w.r.t. $T^{n-1}$. This implies $H^*_{T^{n-1}}(X)\cong H^*(BT^{n-1})\otimes H^*(X)$. Lemma \ref{lemBadDisk} and the result of Masuda and Panov imply that $H^*(X)$ has nontrivial odd degree components. Hence $H^*_{T^{n-1}}(X)$ has nontrivial odd degree components as well. Localization theorem asserts the isomorphism of localized modules
\begin{equation}\label{eqLocalizFormula}
S^{-1}H^*_{T^{n-1}}(X)\cong S^{-1}H^*_{T^{n-1}}(X^{T^{n-1}}),
\end{equation}
for some homogeneous multiplicative set $S\subset H^*(BT^{n-1})$. However the fixed point set $X^{T^{n-1}}$ is finite by assumption,
therefore $H^*_{T^{n-1}}(X^{T^{n-1}})\cong H^*(BT^{n-1})^d$. Hence the r.h.s. of \eqref{eqLocalizFormula} does not have odd degree components: this gives a contradiction.
\end{proof}

\section{Vector-tuples up to orthogonal transformations}\label{secLinearAlgebra}

Let $\Mat_{l\times k}$ denote the vector space of real $(l\times k)$-matrices and
\[
Y_{l,k}=\Mat_{l\times k}/\Ro_+\cong S^{lk-1}
\]
denote the sphere of normalized matrices. The orthogonal group $O(l)$ acts on $Y_{l,k}$ by the left multiplication.
In \cite{Arn} Arnold proved

\begin{prop}\label{propNplusOneTuplesNspace}
The orbit space $Y_{n-1,n}/O(n-1)$ is homeomorphic to a sphere of dimension $(n^2+n-4)/2$.
\end{prop}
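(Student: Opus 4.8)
The plan is to identify the orbit space with a family of Gram matrices, recognize it as the boundary of the cone of positive semidefinite matrices, and finish by convex geometry. First I would read a matrix $A\in\Mat_{(n-1)\times n}$ as a tuple of $n$ column vectors $v_1,\dots,v_n\in\Ro^{n-1}$, on which $g\in O(n-1)$ acts simultaneously by $v_i\mapsto gv_i$. The central observation is that two such tuples lie in the same $O(n-1)$-orbit if and only if they have the same Gram matrix $G=(\langle v_i,v_j\rangle)_{i,j}=A^{\top}A$: one direction is immediate since elements of $O(n-1)$ preserve inner products, and the converse is the standard fact that an isometry between the linear spans of two configurations with equal Gram matrices extends to an element of the \emph{full} orthogonal group (here it is essential that reflections are allowed, i.e.\ that we use $O(n-1)$ and not $\SO(n-1)$). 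Thus $A\mapsto A^{\top}A$ separates orbits.

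Next I would describe the image. The matrices $A^{\top}A$, for $A\in\Mat_{(n-1)\times n}$, are exactly the symmetric positive semidefinite $n\times n$ matrices of rank at most $n-1$, that is, the points of the boundary $\dd\Ca$ of the cone $\Ca\subset\mathrm{Sym}_n(\Ro)$ of positive semidefinite matrices; this follows from the eigendecomposition of a positive semidefinite matrix. The normalization $\Mat_{(n-1)\times n}/\Ro_+$ passes to the rescaling $G\mapsto\lambda^2 G$, $\lambda>0$. Since $A\neq 0$ forces $\Tr(A^{\top}A)>0$, the assignment $A\mapsto A^{\top}A/\Tr(A^{\top}A)$ induces a continuous bijection from $Y_{n-1,n}/O(n-1)$ onto the spectrohedral slice
\[
\dd\ca{S}=\{G\in\mathrm{Sym}_n(\Ro)\mid G\succeq 0,\ \Tr G=1,\ \det G=0\},
\]
and this bijection is a homeomorphism because $Y_{n-1,n}/O(n-1)$ is compact and the target is Hausdorff.

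Finally I would compute the homeomorphism type. The spectrohedron $\ca{S}=\{G\succeq 0,\ \Tr G=1\}$ is a compact convex body of full dimension inside the affine hyperplane $\{\Tr G=1\}$ of $\mathrm{Sym}_n(\Ro)\cong\Ro^{n(n+1)/2}$; its relative interior is the set of positive definite matrices ($\det G>0$), so its relative boundary is precisely $\dd\ca{S}$. A compact convex body of dimension $m$ is homeomorphic to the disk $D^m$ with boundary $S^{m-1}$; here $m=n(n+1)/2-1$, which yields $\dd\ca{S}\cong S^{n(n+1)/2-2}=S^{(n^2+n-4)/2}$, as claimed. The main obstacle, and the step deserving the most care, is the orbit-separation claim: I must verify that equality of Gram matrices forces the two vector tuples to be related by some $g\in O(n-1)$ even when the vectors fail to span $\Ro^{n-1}$, since it is exactly these degenerate low-rank configurations that constitute $\dd\ca{S}$, i.e.\ the orbit space itself.
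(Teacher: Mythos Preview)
Your proposal is correct and follows essentially the same route as the paper's proof: identify $O(n-1)$-orbits of column tuples with their Gram matrices, recognize the image as the boundary of the cone of positive semidefinite $n\times n$ matrices, and read off the sphere as the boundary of the resulting convex body. The only cosmetic difference is that the paper phrases the last step as ``rays in the boundary of a strictly convex cone'' while you slice by $\Tr G=1$ to obtain the spectrohedron directly; these are equivalent normalizations. Your explicit remark that the Gram-matrix map must separate orbits even for non-spanning tuples is a point the paper leaves implicit, so your write-up is if anything slightly more careful on that step.
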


\begin{proof}
We outline the proof. Each matrix $A\in Y_{n-1,n}$ can be thought as a normalized $n$-tuple of vectors in Euclidean space $\Ro^{n-1}$. With each such tuple we can associate its Gram matrix, i.e. the square matrix $G=A^\top A$ of size $n$. It can be seen that two $n$-tuples of vectors produce the same Gram matrix if and only if the tuples differ by common orthogonal transformation. All Gram matrices produced from $Y_{n-1,n}$ are positive semi-definite symmetric matrices, moreover, they are degenerate. Positive semi-definite symmetric matrices form a strictly convex cone $C_{n}$ in the space of all symmetric matrices of size $n$ (the space and the cone both have dimension $n(n+1)/2$). The boundary of $C_{n}$ consists of degenerate positive semi-definite matrices. Therefore, the space of rays lying in $\dd C_{n}$ is homeomorphic to $Y_{n-1,n}/O(n-1)$. On the other hand, the space of rays, lying in the boundary of any strictly convex cone of dimension $d$ is obviously a sphere of dimension $d-2$.
\end{proof}

It is convenient to intersect the strictly convex cone $C_n$ of positive semidefinite $(n\times n)$-matrices with a generic affine hyperplane. Whenever such an intersection is nonempty and bounded, the intersection is called a \emph{spectrohedron}\footnote{In some sources the term spectrohedron denotes an intersection of the cone $C_n$ with a plane of any dimension, not just hyperplanes.}. A spectrohedron is therefore a compact convex body, defined uniquely up to projective transformations. Denoting the spectrohedron by $\Spec_n$ we have the formula $\dim \Spec_n=n(n+1)/2-1$. Therefore, spectrohedra have dimensions $2,5,9,14$, etc.

\begin{prop}\label{propNtuplesNspace}
The orbit space $Y_{n,n}/\SO(n)=S^{n^2-1}/\SO(n)$ is homeomorphic to a sphere of dimension $d=(n^2+n-2)/2$. The orbit space $Y_{n,n}/O(n)$ is a disk of the same dimension~$d$.
\end{prop}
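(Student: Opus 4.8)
The plan is to exploit the same Gram-matrix correspondence used in Proposition~\ref{propNplusOneTuplesNspace}, but now with square matrices, where the Gram matrices are allowed to be nondegenerate. First I would consider the continuous, $O(n)$-invariant map
\[
\Phi\colon Y_{n,n}\to \Spec_n,\qquad \Phi(A)=\frac{A^\top A}{\Tr(A^\top A)},
\]
landing in the realization $\Spec_n=C_n\cap\{\Tr=1\}$ of the spectrohedron; the division makes $\Phi$ well defined on $Y_{n,n}=\Mat_{n\times n}/\Ro_+$. Exactly as before, $A^\top A=A'^\top A'$ if and only if $A'=QA$ for some $Q\in O(n)$, so the fibres of $\Phi$ are precisely the $O(n)$-orbits, and $\Phi$ surjects onto the full spectrohedron (every positive semidefinite matrix is a Gram matrix $G^{1/2}\cdot G^{1/2}$). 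Since $Y_{n,n}$ is compact and $\Spec_n$ is Hausdorff, the induced continuous bijection $Y_{n,n}/O(n)\to\Spec_n$ is a homeomorphism; as $\Spec_n$ is a compact convex body of dimension $n(n+1)/2-1=d$, it is a disk $D^d$. This settles the second assertion.

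For the first assertion I would analyse the residual $\Zt\cong O(n)/\SO(n)$-action on $Y_{n,n}/\SO(n)$, whose quotient is $Y_{n,n}/O(n)\cong\Spec_n$. The key observation is that $\sgn\det A$ is invariant under $\SO(n)$ and under positive rescaling, so it descends to $Y_{n,n}$, while it is negated by the nontrivial class of $O(n)/\SO(n)$. Two matrices with the same Gram matrix $G$ lie in the same $\SO(n)$-orbit precisely when either $G$ is nondegenerate and the two determinant signs agree, or $G$ is degenerate. In the degenerate case the column span is a proper subspace, which admits an orientation-reversing orthogonal transformation fixing it pointwise (a reflection in the nonzero orthogonal complement), so the $O(n)$-orbit is a single $\SO(n)$-orbit. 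Consequently $Y_{n,n}/\SO(n)\to\Spec_n$ is two-to-one over the interior (full-rank matrices, distinguished by $\sgn\det$) and one-to-one over the boundary $\partial\Spec_n$ of degenerate matrices; that is, $Y_{n,n}/\SO(n)$ should be the double of the disk $\Spec_n$ along its boundary, hence homeomorphic to $S^d$.

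To make this rigorous I would construct the map to the double explicitly. Writing $\mathcal{D}(\Spec_n)=\Spec_n\times\{+1,-1\}/{\sim}$ with $(G,+1)\sim(G,-1)$ for $G\in\partial\Spec_n$, and recalling $\mathcal{D}(\Spec_n)\cong S^d$, I would set $g(A)=[\Phi(A),\sgn\det A]$, interpreting the value at $\det A=0$ as the common boundary class. This $g$ is $\SO(n)$-invariant and, by the orbit analysis above, descends to a bijection $\overline g\colon Y_{n,n}/\SO(n)\to\mathcal{D}(\Spec_n)$. The main obstacle is continuity: $\sgn\det$ is genuinely discontinuous on $Y_{n,n}$ across the degenerate locus, so one must verify that this discontinuity is exactly healed by the gluing. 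The point is that if $A_m\to A_0$ with $\det A_0=0$, then $\Phi(A_m)\to\Phi(A_0)\in\partial\Spec_n$, and since $(G,+1)$ and $(G,-1)$ both tend to the single glued class as $G\to\partial\Spec_n$, the images $g(A_m)$ converge to $g(A_0)$ irrespective of the signs. Granting continuity, $\overline g$ is a continuous bijection from the compact space $Y_{n,n}/\SO(n)$ to the Hausdorff sphere $\mathcal{D}(\Spec_n)$, hence a homeomorphism, and $Y_{n,n}/\SO(n)\cong S^d$ follows.
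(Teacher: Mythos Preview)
Your argument is correct and follows essentially the same route as the paper: identify $Y_{n,n}/O(n)$ with the spectrohedron via $A\mapsto A^\top A$ (the paper uses the equivalent polar form $A\mapsto\sqrt{A^\top A}$), then use $\sgn\det$ to split the $\SO(n)$-quotient into two copies of the disk glued along the degenerate boundary. The only cosmetic difference is that the paper first decomposes $Y_{n,n}=Z_{\geqslant}\cup Z_{\leqslant}$ by determinant sign and quotients each half separately, whereas you package the same decomposition as a map to the double $\mathcal{D}(\Spec_n)$; your explicit check that the discontinuity of $\sgn\det$ is absorbed by the boundary gluing is a welcome addition.
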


\begin{proof}
For a matrix $A\in \Mat_{n\times n}$ we consider its polar decomposition $A=QP$, where $Q\in O(n)$ and $P$ is nonnegative symmetric. This decomposition is non-unique if $A$ is degenerate, however, the nonnegative part of the decomposition is uniquely determined by the formula $P=\sqrt{A^\top A}$ for any matrix $A$. The second part of the statement follows easily: the space $Y_{n,n}/\SO(n)$ is homeomorphic to the set of normalized positive semi-definite symmetric matrices of size $n$, which is nothing but the spectrohedron $\Spec_n$, hence a disc of the required dimension.

Let $Z_\geqslant=\{A\in Y_{n,n}\mid \det A\geqslant 0\}$ and $Z_\leqslant=\{A\in Y_{n,n}\mid \det A\leqslant 0\}$. For $A\in Z_\geqslant$ the matrix $Q$ in the polar decomposition can be chosen from $\SO(n)$, therefore $Z_\geqslant/\SO(n)\cong D^d$ as in the previous case. On the other hand, there is an $\SO(n)$-equivariant homeomorphism $Z_\geqslant\to Z_\leqslant$, given by the right multiplication by a fixed reflection matrix. Therefore, we also have $Z_\leqslant/\SO(n)\cong D^d$. The boundaries of both disks are formed by (normalized) degenerate $n$-tuples in $\Ro^n$ considered up to rotations: in degenerate case the matrix $Q$ in the polar decomposition can be taken either with positive or negative determinant, therefore such points belong to both disks. The boundary sphere of both disks is described by Proposition~\ref{propNplusOneTuplesNspace}. Two disks patched along common boundary form a sphere.
\end{proof}

\begin{ex}
Taking $n=2$ in the previous proposition we get $Y_{2,2}/\SO(2)\cong S^2$. The space $Y_{2,2}$ of normalized 2-tuples in $\Ro^2$ coincides with the join of two circles $S^1\ast S^1\cong S^3$, and the circle $S^1=\SO(2)$ acts on the join $S^1\ast S^1$ diagonally. Hence the map $Y_{2,2}\to S^2$ coincides with the Hopf fibration.
\end{ex}

\begin{ex}
For $n=3$ we get $Y_{3,3}/\SO(3)\cong S^5$. The geometrical meaning of this homeomorphism is explained in the next section.
\end{ex}

\section{Quaternionic projective plane}\label{secHP}

Recall that $\HP^2=\Ho^3/\Ho^*=S^{11}/S^3$, where $S^{11}$ is the unit sphere in the space $\Ho^3\cong\Co^6\cong \Ro^{12}$, and $S^3$ is the sphere of unit quaternions. The group $S^3$ is isomorphic to $\SU(2)$ and its action on $\Ho^3$ can be identified with the natural coordinate-wise action of $\SU(2)$ on $(\Co^2)^3$. In the following we assume that the groups $\SU(2)$ act from the right, while the torus $T^3$ acts on $\Ho^3\cong (\Co^2)^3$ by the left multiplication. Each component of the torus acts on the corresponding copy of $\Co^2$ by multiplication.

We now prove Theorem \ref{thmHP}, i.e. the homeomorphism $\HP^2/T^3\cong S^5$.

\begin{proof}[Proof of Theorem \ref{thmHP}]
We need to describe the double quotient
\[
\HP^2/T^3=T^3\backslash \jump{S^{11}}/S^3.
\]
First note that $S^{11}$ can be represented as the join
\[
S^{11}\cong S^3\ast S^3\ast S^3,
\]
where each $S^3$ is the sphere of unit quaternions, taken in the corresponding factor of the product $\Ho\times\Ho\times \Ho$. Therefore,
\[
\sit{T^3}\backslash \jump{S^{11}} \cong (\sit{S^1}\backslash \jump{S^3})\ast(\sit{S^1}\backslash \jump{S^3})\ast(\sit{S^1}\backslash \jump{S^3})\cong \CP^1\ast\CP^1\ast\CP^1
\]
since each factor of the join forms a Hopf bundle $S^3\stackrel{S^1}{\to}\CP^1$. Let us describe the quotient
\[
\sit{T^3}\backslash \jump{S^{11}}/\sit{S^3}\cong (\CP^1\ast\CP^1\ast\CP^1)/\SU(2),
\]
where $\SU(2)$ acts simultaneously on all copies of $\CP^1$ in the standard way. The action of $\SU(2)$ on $\CP^1$ has noneffective kernel $\{\pm1\}$, therefore reduces to the action of $\SU(2)/\{\pm1\}\cong \SO(3)$ on $\CP^1$. This action coincides with the action of the rotation group $\SO(3)$ on the round 2-sphere $S^2\subset \Ro^3$. Hence we need to describe the orbit space
\[
(S^2\ast S^2\ast S^2)/\SO(3)
\]
Note that the join $S^2\ast S^2\ast S^2\cong S^8$ coincides with the sphere $Y_{3,3}$ of normalized $3$-tuples of vectors in $\Ro^3$. Proposition \ref{propNtuplesNspace} implies that the quotient $S^8/\SO(3)$ is homeomorphic to $S^5$. This concludes the proof of Theorem \ref{thmHP}.
\end{proof}

Recall that given an action of a compact torus $T=T^k$ on a manifold $X=X^{2n}$ one can construct an equivariant filtration of $X$:
\[
X_0\subset X_1 \subset X_2\subset\cdots\subset X_k=X,
\]
where the filtration term $X_i$ consists of torus orbits with dimension at most $i$. If $Q=X/T$ denotes the orbit space, we obtain the quotient filtration on $Q$:
\[
Q_0\subset Q_1 \subset Q_2\subset\cdots\subset Q_k=Q,\qquad Q_i=X_i/T
\]

As was mentioned in Section \ref{secHalfDim}, whenever all fixed points of a $T^{n-1}$-action on $X^{2n}$ are isolated, and the weights are in general position, then the orbit space is a topological manifold. Moreover, in this case $\dim Q_i=i$, $\dim X_i = 2i$ for $i\leqslant n-2$, and the $(n-1)$-skeleton $Q_{n-2}$ has a local topological structure, encoded in the notion of a \emph{sponge} \cite{AyzCompl}.

A sponge is a topological space locally modeled by a $(n-2)$-skeleton of the fan $\Delta_{n-1}$ of the toric variety $\CP^{n-1}$. The sponge corresponding to an appropriate action of $T^2$ on a 6-manifold is a 3-valent graph: this is merely the GKM-graph of the action. The sponges corresponding to appropriate actions of $T^3$ on 8-manifolds are locally modeled by the space shown on Fig.\ref{figSponge}. In \cite{AyzCompl} we described the sponges for $F_3$ and $G_{4,2}$ and showed that in some cases the question of extendability of the torus action to the action of a larger torus can be reduced to the question of embeddability of the sponge into a low-dimensional sphere.

\begin{figure}[h]
\begin{center}
\includegraphics[scale=0.2]{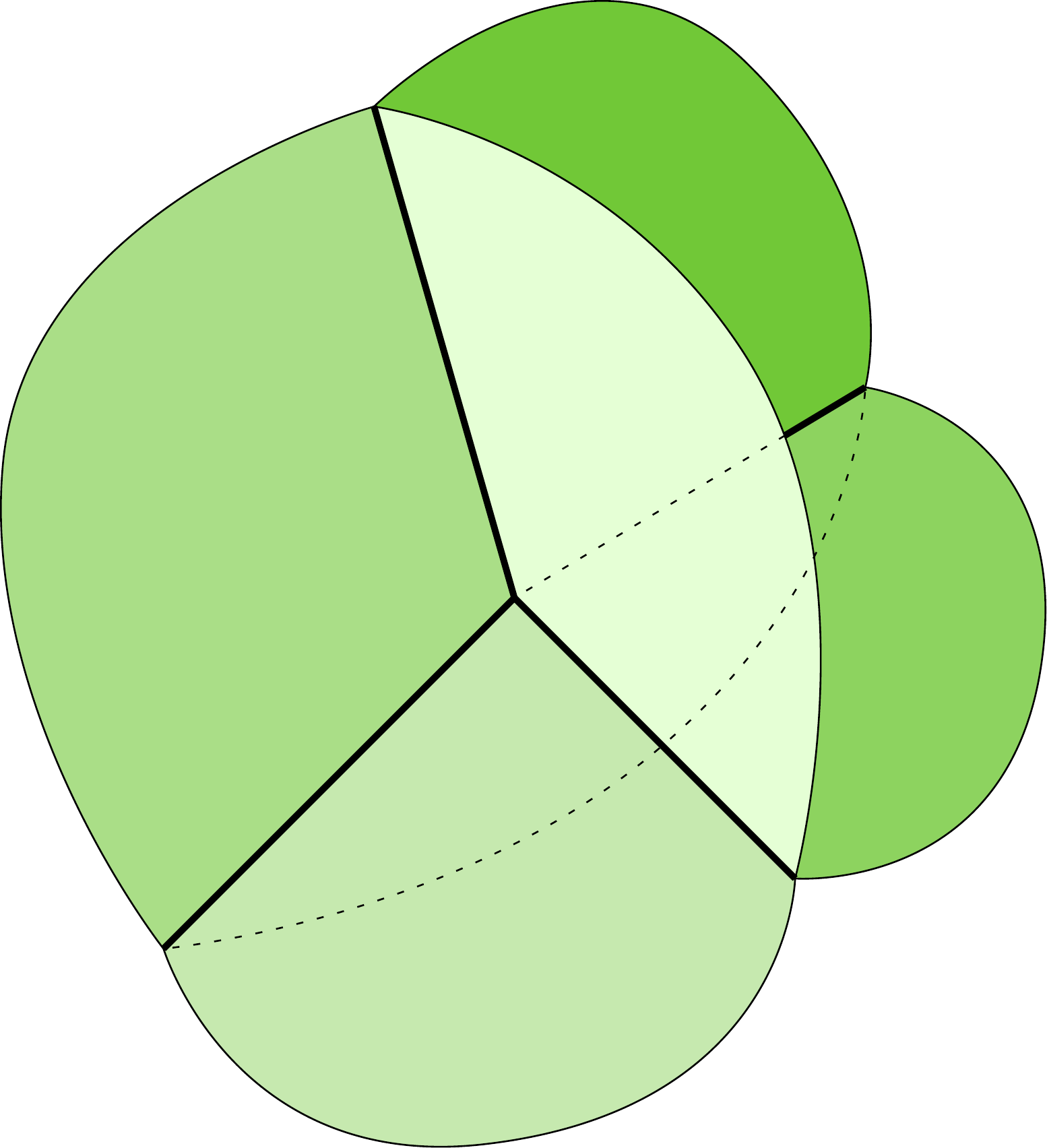}
\end{center}
\caption{Local structure of the sponge for the $T^3$-action on $X^8$.}\label{figSponge}
\end{figure}

Next we describe the torus action on $\HP^2$ and its equivariant skeleton in detail, in order to understand its sponge (the result is shown on Fig.\ref{figHPSponge}). Let $[h_0:h_1:h_2]$ be the homogeneous coordinates on $\HP^2$, defined up to the multiplication by $h\in \Ho^*$ from the right. We represent any quaternion $h\in \Ho$ as $h=z+ju$, and write $h=(z,u)$, where $z,u\in \Co$, and $j$ is the imaginary unit. The letter $t$ denotes the element of 1-dimensional torus: $t\in \Co$, $|t|=1$. As was already mentioned, the torus $T^3$ acts on $\HP^2$ by multiplication from the left:
\[
(t_0,t_1,t_2)[h_0:h_1:h_2]=[t_0h_0:t_1h_1:t_2h_2].
\]
It can be seen that the left action of the circle on $\Ho$ can be written in complex coordinates as follows: $t(z,u)=(tz,t^{-1}u)$.

The subgroup $Z=\langle (-1,-1,-1)\rangle\subset T^3$ acts trivially. So far, to make the action effective, we consider the induced action of $T^3/\langle (-1,-1,-1)\rangle$.

\begin{con}
In $\HP^2$ we have the following torus invariant submanifolds:

\begin{enumerate}
  \item 3 copies of $\HP^1\cong S^4$ given by
\[
M_{01}=\{[\ast:\ast:0]\},\quad M_{02}=\{[\ast:0:\ast]\},\quad M_{12}=\{[0:\ast:\ast]\}
\]
  \item 4 copies of $\CP^2$ given by\footnote{It is convenient to have two different symbols for the same object.}
\[
N_{+++}=\{[(\ast,0):(\ast,0):(\ast,0)]\}\quad(=N_{---}=\{[(0,\ast):(0,\ast):(0,\ast)]\});
\]
\[
N_{++-}=\{[(\ast,0):(\ast,0):(0,\ast)]\}\quad(=N_{--+}=\{[(0,\ast):(0,\ast):(\ast,0)]\});
\]
\[
N_{+-+}=\{[(\ast,0):(0,\ast):(\ast,0)]\}\quad(=N_{-+-}=\{[(0,\ast):(\ast,0):(0,\ast)]\});
\]
\[
N_{+--}=\{[(\ast,0):(0,\ast):(0,\ast)]\}\quad(=N_{-++}=\{[(0,\ast):(\ast,0):(\ast,0)]\}).
\]
  \item 6 copies of $\CP^1$ given by
\[
S_{0+1+}=\{[(\ast,0):(\ast,0):0]\}\quad(=S_{0-1-}=\{[(0,\ast):(0,\ast):0]\});
\]
\[
S_{0+1-}=\{[(\ast,0):(0,\ast):0]\}\quad(=S_{0-1+}=\{[(0,\ast):(\ast,0):0]\});
\]
\[
S_{0+2+}=\{[(\ast,0):0:(\ast,0)]\}\quad(=S_{0-2-}=\{[(0,\ast):0:(0,\ast)]\});
\]
\[
S_{0+2-}=\{[(\ast,0):0:(0,\ast)]\}\quad(=S_{0-2+}=\{[(0,\ast):0:(\ast,0)]\});
\]
\[
S_{1+2+}=\{[0:(\ast,0):(\ast,0)]\}\quad(=S_{1-2-}=\{[0:(0,\ast):(0,\ast)]\});
\]
\[
S_{1+2-}=\{[0:(\ast,0):(0,\ast)]\}\quad(=S_{1-2+}=\{[0:(0,\ast):(\ast,0)]\}).
\]
  \item The fixed points
\[
v_0=[\ast:0:0],\qquad v_1=[0:\ast:0],\quad v_2=[0:0:\ast].
\]
\end{enumerate}
All the listed submanifolds have non-trivial stabilizers in $T^3$. The 4-submanifolds $M_{ij}$ and $N_{\epsilon_0\epsilon_1\epsilon_2}$ consist of at most 2-dimensional torus orbits. A submanifold $M_{ij}$ is stabilized by the circle $\{t_i=t_j=1\}$, and $N_{\epsilon_0\epsilon_1\epsilon_2}$ is stabilized by the circle $\{t_0^{\epsilon_0}=t_1^{\epsilon_1}=t_2^{\epsilon_2}\}$. The 2-spheres $S_{i\epsilon_ij\epsilon_j}$ consist of 1-dimensional orbits. These 2-spheres are the pairwise intersections of the submanifolds $M_{ij}$ and $N_{\epsilon_0\epsilon_1\epsilon_2}$.
\end{con}

\begin{figure}[h]
\begin{center}
\includegraphics[scale=0.3]{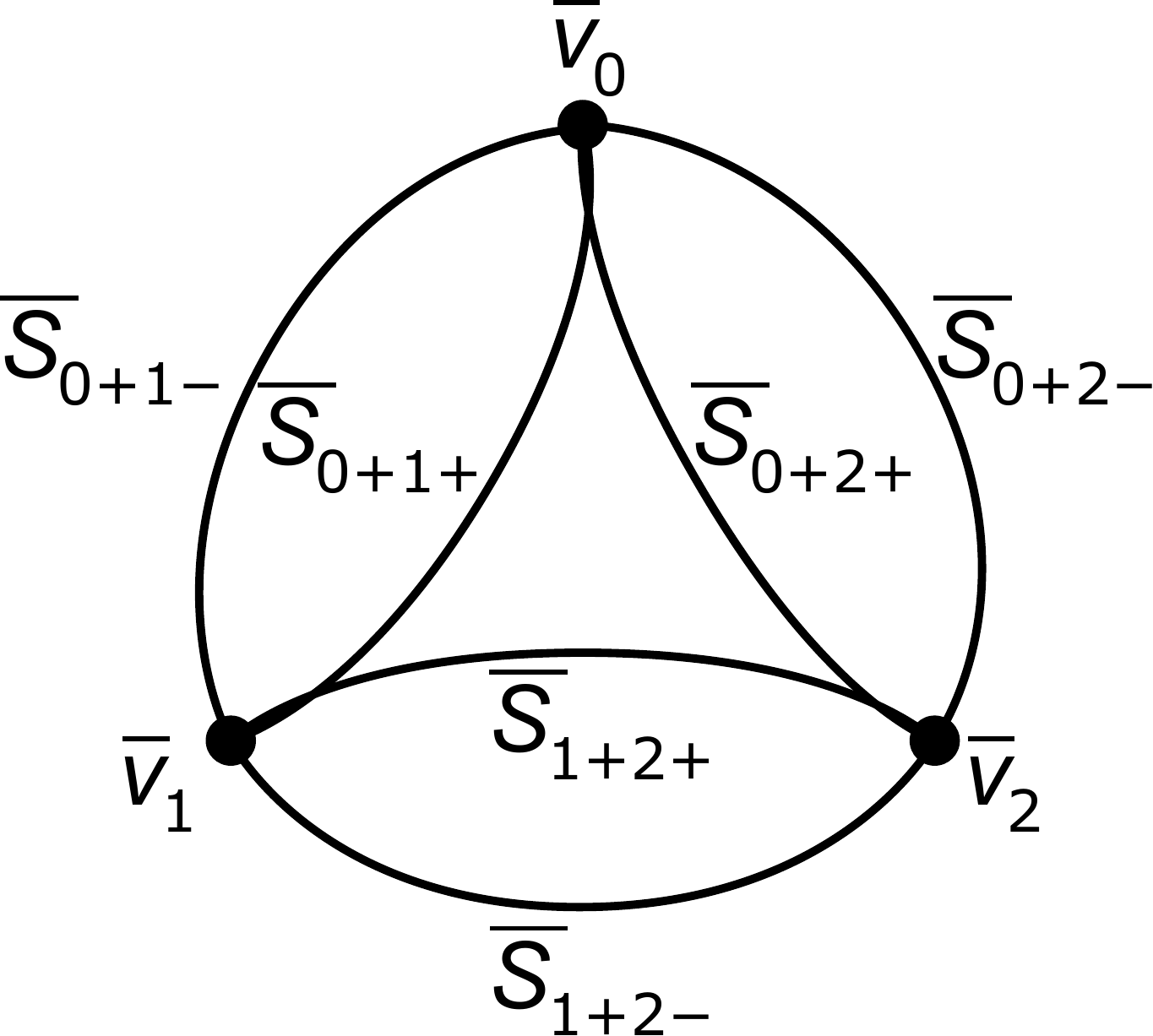}
\end{center}
\caption{The unlabeled GKM-graph $(\HP^2/T^3)_1$ of $\HP^2$}\label{figHPGKM}
\end{figure}

\begin{prop}
The equivariant 2-skeleton $(\HP^2)_2$ of $\HP^2$ is the union of the submanifolds $M_{ij}$ and $N_{\epsilon_0\epsilon_1\epsilon_2}$. The equivariant 1-skeleton $(\HP^2)_1$ is the union of submanifolds $S_{i\epsilon_ij\epsilon_j}$. The action of $T^3/\langle(-1,-1,-1)\rangle$ on $\HP^2$ is free outside $(\HP^2)_2$.
\end{prop}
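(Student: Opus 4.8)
The plan is to reduce all three assertions to a single computation of the isotropy groups and then match the resulting strata with the listed submanifolds. Since the orbit through $p$ has dimension $3-\dim\mathrm{Stab}(p)$ (the kernel $\langle(-1,-1,-1)\rangle$ is finite and does not affect dimensions), the skeleta are governed entirely by $\dim\mathrm{Stab}(p)$.

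First I would translate the stabilizer condition into quaternionic algebra. The circle acts on a homogeneous coordinate by left multiplication by a unit complex number, while the coordinates $[h_0:h_1:h_2]$ are taken up to right multiplication by $h\in\Ho^*$. Hence $(t_0,t_1,t_2)\in T^3$ fixes $p=[h_0:h_1:h_2]$ exactly when there is a unit quaternion $q\in S^3$ with $t_ih_i=h_iq$ for all $i$. For every index $i$ with $h_i\neq0$ this forces $t_i=h_iqh_i^{-1}$, which must moreover be a unit complex number; for $h_i=0$ the coordinate $t_i$ is unconstrained. Writing $S=\{i:h_i\neq0\}$, the stabilizer is therefore the image of $\{q\in S^3: h_iqh_i^{-1}\in U(1)\ \forall i\in S\}$ under $q\mapsto(h_iqh_i^{-1})_i$, with the coordinates outside $S$ free.

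Next I would analyze the condition $hqh^{-1}\in U(1)$ for fixed $h\neq0$. Conjugation by $h$ is a rotation of $\im\Ho\cong\Ro^3$, and $hqh^{-1}\in U(1)$ holds iff $q\in h^{-1}U(1)h$, which is the great circle $C_h=\{\cos\phi+\sin\phi\,v:\phi\in\Ro\}$, where $v=h^{-1}ih$ is a unit imaginary quaternion. All circles $C_{h_i}$ pass through $\pm1$, and two of them either coincide (when their directions $v_i:=h_i^{-1}ih_i$ agree up to sign, i.e. span the same plane) or meet only in $\{\pm1\}$. Thus $\bigcap_{i\in S}C_{h_i}$ is a full circle precisely when all $v_i$, $i\in S$, agree up to sign, and equals $\{\pm1\}$ otherwise. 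This yields $\dim\mathrm{Stab}(p)=(3-|S|)+d$, where $d=1$ in the coincident case and $d=0$ otherwise.

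Finally I would read off the skeleta, using that a change of representative $h_i\mapsto h_iq$ transforms $v_i\mapsto q^{-1}v_iq$, so the relations $v_i=\pm v_j$ are invariant and any common direction can be rotated to $i$; then $v_i=i$ means $u_i=0$ and $v_i=-i$ means $z_i=0$. Now $\dim\mathrm{Stab}(p)\geqslant1$ (orbit dimension $\leqslant2$) holds iff some coordinate vanishes, i.e. $p\in\bigcup M_{ij}$, or all three $v_i$ agree up to sign, i.e. $p\in\bigcup N_{\epsilon_0\epsilon_1\epsilon_2}$; this gives $(\HP^2)_2$. Similarly $\dim\mathrm{Stab}(p)\geqslant2$ (orbit dimension $\leqslant1$) holds iff $|S|\leqslant1$ or $|S|=2$ with the two surviving directions equal up to sign, which is exactly the union of the $\CP^1$'s $S_{i\epsilon_ij\epsilon_j}$ (these also contain the three fixed points), giving $(\HP^2)_1$. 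For $p\notin(\HP^2)_2$ we are in the case $|S|=3$ with $C_{h_0},C_{h_1},C_{h_2}$ not all equal, so $q\in\{\pm1\}$ and $\mathrm{Stab}(p)=\{(1,1,1),(-1,-1,-1)\}=\langle(-1,-1,-1)\rangle$; hence $T^3/\langle(-1,-1,-1)\rangle$ acts freely there. The main obstacle is the middle step: setting up the isotropy computation correctly in the noncommutative setting—keeping straight that the torus is left multiplication by complex units while projectivization is right multiplication—and then extracting the clean description of $\{q:hqh^{-1}\in U(1)\}$ as the circle $C_h$ together with the coincide-or-meet-at-$\pm1$ dichotomy; once this is in place, matching the strata to $M_{ij}$, $N_{\epsilon_0\epsilon_1\epsilon_2}$ and $S_{i\epsilon_ij\epsilon_j}$ is routine bookkeeping with the conjugation action on the $v_i$.
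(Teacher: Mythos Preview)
Your argument is correct and complete, but it takes a different route from the paper. The paper normalizes one homogeneous coordinate to $1$ (so $h_0=1$), writes $h_1=(z_1,u_1)$, $h_2=(z_2,u_2)$ in complex coordinates, and obtains the system $(z_s,u_s)=(t_st_0^{-1}z_s,\,t_st_0u_s)$; from this it argues directly that if at least three of $z_1,u_1,z_2,u_2$ are nonzero then $(t_0,t_1,t_2)\in\langle(-1,-1,-1)\rangle$, and the remaining cases land in some $M_{ij}$ or $N_{\epsilon_0\epsilon_1\epsilon_2}$. The $1$-skeleton is not computed in the paper; it is simply attributed to the known GKM description of $\HP^2$. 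Your approach instead works invariantly: you recast the isotropy condition as $q\in\bigcap_{i\in S}C_{h_i}$ with $C_h=h^{-1}U(1)h$ the circle in $S^3$ through $\pm1$ with direction $v=h^{-1}ih$, use the dichotomy that two such circles either coincide or meet only in $\{\pm1\}$, and obtain the closed formula $\dim\mathrm{Stab}(p)=(3-|S|)+d$. The paper's normalization $h_0=1$ is exactly your ``rotate the common direction to $i$'' specialized to the first coordinate, and its count of nonzero complex components is the chart realization of your sign pattern among the $v_i$. What you gain is uniformity: the same formula yields the $1$-skeleton and the freeness statement without any appeal to an external GKM computation, and the argument is manifestly symmetric in the three coordinates. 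What the paper gains is brevity: once a chart is fixed, the whole thing is a two-line linear-algebra check.
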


\begin{proof}
Consider the element $t=(t_0,t_1,t_2)\in T^3$ stabilizing a point $x=[h_0:h_1:h_2]\in \HP^2$. We need to show that whenever $(t_0,t_1,t_2)\notin \langle(-1,-1,-1)\rangle$, the point $[h_0:h_1:h_2]$ lies in one of the subsets $M_{ij}$ or $N_{\epsilon_0\epsilon_1\epsilon_2}$. We may assume $h_0=1$, since otherwise $x\in M_{12}$, and the statement is proved. Since $tx=x$ we have
\[
[1:h_1:h_2]=[t_0:t_1h_1:t_2h_2]=[1:t_1h_1t_0^{-1}:t_2h_2t_0^{-1}].
\]
Therefore $h_1=t_1h_1t_0^{-1}$ and $h_2=t_2h_2t_0^{-1}$. In complex coordinates we have
\[
\begin{cases}
  (z_1,u_1)=(t_1t_0^{-1}z_1,t_1t_0u_1)\\
  (z_2,u_2)=(t_2t_0^{-1}z_2,t_2t_0u_2).
\end{cases}
\]
If at least 3 of the complex numbers $z_1,u_1,z_2,u_2$ are nonzero, the relations on $t_i$ would imply $(t_0,t_1,t_2)=(1,1,1)$ or $(-1,-1,-1)$, contradicting the assumption. Hence we have at least two zeroes among $z_1,u_1,z_2,u_2$, which shows that $x$ lies in either $M_{ij}$ or $N_{\epsilon_0\epsilon_1\epsilon_2}$.

The GKM-graph, i.e. the structure of the set $(\HP^2/T^3)_1$ of at most 1-dimensional orbits, is well known for $\HP^2$, see e.g. \cite{Kur}. The GKM-graph of $\HP^2$ is given by doubling the edges of a triangle (see Fig.\ref{figHPGKM}).
\end{proof}

Recall that the orbit space of the $T^2$-action on $\CP^2$ is a triangle, and the orbit space of the $T^2$-action on $\HP^1\cong S^4$ is a biangle. If $A$ is one of the subsets $M_{ij}$, $N_{\epsilon_0\epsilon_1\epsilon_2}$, $S_{i\epsilon_ij\epsilon_j}$ or $\{v_i\}$, we denote its orbit space by $\overline{A}$. Therefore, $\overline{M_{ij}}$ are biangles, $\overline{N_{\epsilon_0\epsilon_1\epsilon_2}}$ are triangles, and $\overline{S_{i\epsilon_ij\epsilon_j}}$ are closed intervals.

\begin{cor}
The sponge $(\HP^2/T^3)_2$ of the $T^3$-action on $\HP^2$ is obtained by gluing 4 triangles and 3 biangles as shown on Fig.\ref{figHPSponge}.
\end{cor}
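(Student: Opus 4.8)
The plan is to identify the sponge $(\HP^2/T^3)_2$ with the orbit space of the equivariant $2$-skeleton $(\HP^2)_2$, and then read off the gluing pattern from the incidences among the submanifolds $M_{ij}$, $N_{\epsilon_0\epsilon_1\epsilon_2}$ and their pairwise intersections $S_{i\epsilon_ij\epsilon_j}$ catalogued in the preceding Construction. Since the previous Proposition gives $(\HP^2)_2=\bigcup_{i<j}M_{ij}\cup\bigcup_\epsilon N_{\epsilon_0\epsilon_1\epsilon_2}$, and passing to the orbit space commutes with unions of closed invariant subsets, we immediately obtain
\[
(\HP^2/T^3)_2=\Bigl(\bigcup_{i<j}\overline{M_{ij}}\Bigr)\cup\Bigl(\bigcup_\epsilon \overline{N_{\epsilon_0\epsilon_1\epsilon_2}}\Bigr),
\]
the pieces being glued precisely along the orbit spaces of their pairwise intersections.

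First I would record the shape of each piece, using the facts established just above. Each $N_{\epsilon_0\epsilon_1\epsilon_2}\cong\CP^2$ carries an effective residual $T^2$-action (the quotient of $T^3$ by its stabilizing circle), so $\overline{N_{\epsilon_0\epsilon_1\epsilon_2}}$ is the standard triangle with vertices $\overline{v_0},\overline{v_1},\overline{v_2}$ and edges $\overline{S_{0\epsilon_01\epsilon_1}}$, $\overline{S_{0\epsilon_02\epsilon_2}}$, $\overline{S_{1\epsilon_12\epsilon_2}}$, these being the three $2$-spheres $M_{ij}\cap N_{\epsilon_0\epsilon_1\epsilon_2}$. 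Likewise each $M_{ij}\cong\HP^1\cong S^4$ yields a biangle $\overline{M_{ij}}$ with the two vertices $\overline{v_i},\overline{v_j}$ and the two edges $\overline{S_{i+j+}}$, $\overline{S_{i+j-}}$, which are the two $2$-spheres contained in $M_{ij}$.

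Next I would determine the gluing by computing, for each of the six edges $\overline{S}$, exactly which $2$-faces contain it. For instance $S_{0+1+}=\{[(\ast,0):(\ast,0):0]\}$ lies in $M_{01}$ and in exactly those $N_{\epsilon_0\epsilon_1\epsilon_2}$ whose first two homogeneous slots are of the form $(\ast,0)$, namely $N_{+++}$ and $N_{++-}$; the remaining five edges are handled identically. Thus every edge is incident to exactly one biangle and two triangles, so three $2$-faces meet along each edge, matching the three-sheeted local model of Fig.\ref{figSponge}. A bookkeeping check confirms consistency: the $4$ triangles contribute $4\times3$ edge-incidences and the $3$ biangles contribute $3\times2$, totalling $18=6\times3$. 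Assembling the $4$ triangles and $3$ biangles along the six shared edges (the doubled triangle of Fig.\ref{figHPGKM}) and the three shared vertices $\overline{v_i}$ produces exactly the space of Fig.\ref{figHPSponge}.

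The main obstacle is the honest combinatorial bookkeeping: keeping the sign identifications $N_{\epsilon_0\epsilon_1\epsilon_2}=N_{(-\epsilon_0)(-\epsilon_1)(-\epsilon_2)}$ (and the analogous identities for the $S$'s) consistent while tracking which two of the four triangles are attached to each edge of every biangle, so that the resulting incidence pattern matches the figure. I expect no analytic difficulty beyond this careful enumeration, since the shapes of the individual faces and the fact that the $S_{i\epsilon_ij\epsilon_j}$ are the pairwise intersections of the $M_{ij}$ and $N_{\epsilon_0\epsilon_1\epsilon_2}$ have already been established.
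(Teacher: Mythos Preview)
Your proposal is correct and follows essentially the same approach as the paper: the paper states the corollary as an immediate consequence of the preceding Proposition (identifying $(\HP^2)_2$ as the union of the $M_{ij}$ and $N_{\epsilon_0\epsilon_1\epsilon_2}$) together with the observation that the $\overline{N_{\epsilon_0\epsilon_1\epsilon_2}}$ are triangles and the $\overline{M_{ij}}$ are biangles, leaving the combinatorial gluing to be read off from Fig.~\ref{figHPSponge}. Your write-up simply spells out this incidence bookkeeping more explicitly than the paper does.
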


\begin{figure}[h]
\begin{center}
\includegraphics[scale=0.3]{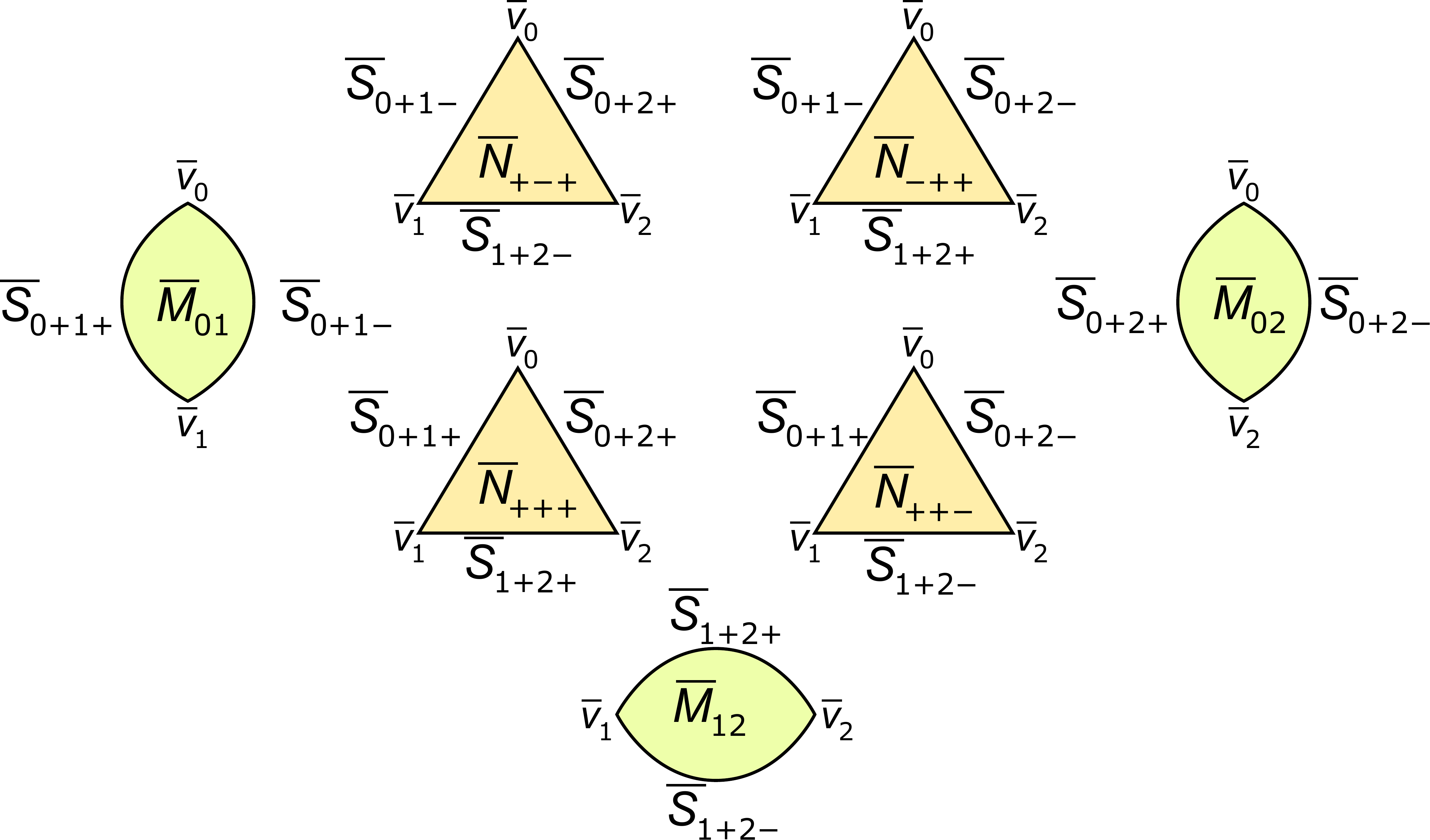}
\end{center}
\caption{The sponge of the $T^3$-action on $\HP^2$ is the cell complex obtained by attaching 4 triangles and 3 biangles}\label{figHPSponge}
\end{figure}

\begin{rem}
There is a simple way to visualize the sponge $(\HP^2/T^3)_2$. First note that 4 triangles in Fig.\ref{figHPSponge} provide the standard triangulation of $\RP^2$, obtained by identifying pairs of opposite points at the boundary of octahedron. So far, to obtain $(\HP^2/T^3)_2$, one needs to attach three disks along three projective lines in $\RP^2$ in general position.
\end{rem}

\begin{figure}[h]
\begin{center}
\includegraphics[scale=0.2]{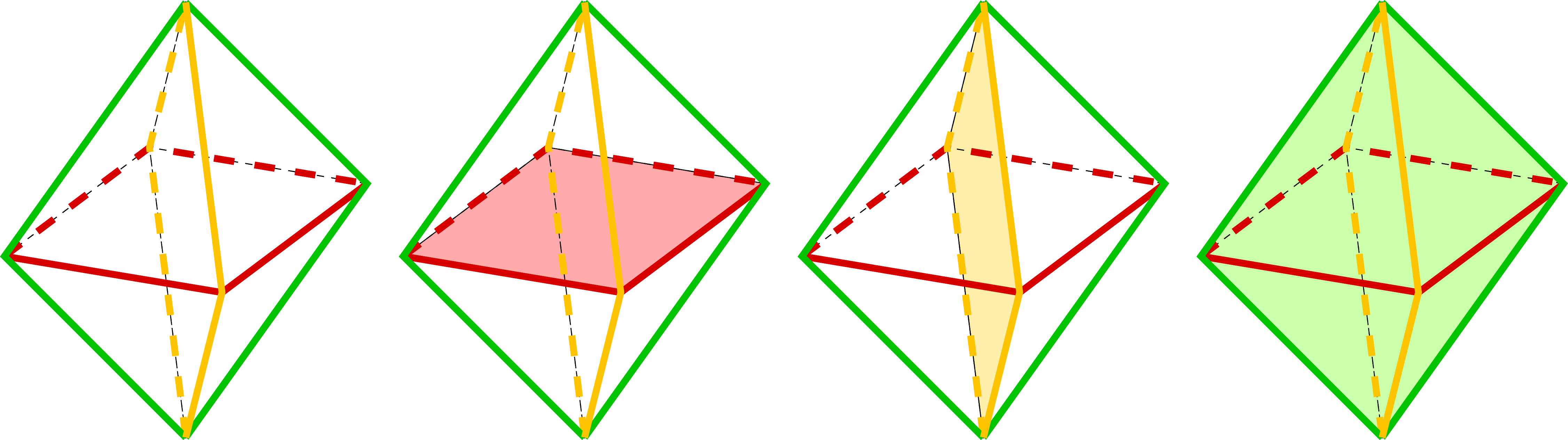}
\end{center}
\caption{The sponge of the $T^3$-action on $G_{4,2}$ is the cell complex obtained by attaching 8 triangles, as in the boundary of an octahedron, and 3 equatorial squares}\label{figGrassmSponge}
\end{figure}

\begin{rem}
There is an interesting connection of $\HP^2$ to the complex Grassmann manifold $G_{4,2}$. Recall \cite{AyzCompl}, that the sponge of the $T^3$-action on $G_{4,2}$ is given by attaching 3 squares to the boundary of an octahedron along equatorial circles, see Fig.\ref{figGrassmSponge}. One can notice that, by identifying pairs of opposite points in this construction, we obtain exactly the sponge $(\HP^2/T^3)_2$ of $\HP^2$.

There is a standard involution $\sigma$ on $G_{4,2}$ which maps a complex 2-plane in $\Co^4$ into its orthogonal complement. This involution induces the antipodal map on the octahedron, the moment map image of $G_{4,2}$. The torus action commutes with $\sigma$, therefore there is a $T^3$-action on the 8-manifold $G_{4,2}/\sigma$. The sponge $((G_{4,2}/\sigma)/T^3)_2$ of this action therefore coincides with the sponge $(\HP^2/T^3)_2$. However, $\HP^2$ is obviously not the same as $G_{4,2}/\sigma$: $\HP^2$ is simply connected, while $G_{4,2}/\sigma$ is not. This example shows that a manifold with complexity one torus action cannot be uniquely reconstructed from its sponge.
\end{rem}

\section{Kuiper--Massey theorem and quasitoric manifolds}\label{secKuiperMassey}

We recall the classical Kuiper--Massey theorem.

\begin{thm}[Kuiper \cite{Kui}, Massey \cite{Mass}]
Let $\conj\colon\CP^2\to\CP^2$ be the antiholomorphic involution of complex conjugation. Then $\CP^2/\conj\cong S^4$.
\end{thm}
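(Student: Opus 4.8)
The plan is to realize the conjugation quotient as one of the linear orbit spaces studied in Section~\ref{secLinearAlgebra}, in direct parallel with the proof of Theorem~\ref{thmHP}. Recall that $\CP^2=S^5/S^1$, where $S^5$ is the unit sphere in $\Co^3$ and $S^1=U(1)$ acts by scalar multiplication. The complex conjugation $\conj$ on $\CP^2$ is induced by the involution $c\colon\Co^3\to\Co^3$, $z\mapsto\bar z$. Since $c(e^{i\theta}z)=e^{-i\theta}c(z)$, the map $c$ normalizes the scalar circle action and therefore descends to an involution on $S^5/S^1=\CP^2$, which is exactly $\conj$. Consequently $\CP^2/\conj=S^5/\langle S^1,c\rangle$, and everything reduces to understanding this group and its action.

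The key step is to identify $\langle S^1,c\rangle$ together with its action on $S^5$. I would write $\Co^3=(\Ro^2)^3$ and observe that on each $\Ro^2=\Co$ factor the scalar circle acts as the rotation subgroup $\SO(2)$, while $c$ acts as the reflection $\diag(1,-1)$; rotations and one reflection generate the full group $O(2)$, acting diagonally, i.e.\ by the same matrix on all three factors. Arranging a vector $(v_1,v_2,v_3)\in(\Ro^2)^3$ into a matrix in $\Mat_{2\times 3}$ with columns $v_1,v_2,v_3$, this diagonal $O(2)$-action becomes left multiplication. Hence $S^5$ is precisely the sphere $Y_{2,3}=\Mat_{2\times 3}/\Ro_+$ equipped with the left $O(2)$-action, and $\CP^2/\conj\cong Y_{2,3}/O(2)$.

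The conclusion is then immediate from Proposition~\ref{propNplusOneTuplesNspace} with $n=3$: the orbit space $Y_{n-1,n}/O(n-1)=Y_{2,3}/O(2)$ is homeomorphic to a sphere of dimension $(n^2+n-4)/2=4$, so $\CP^2/\conj\cong S^4$. No separate analysis of the fixed locus $\RP^2=(\CP^2)^{\conj}$ is required; it appears automatically inside the spectrohedral picture of Proposition~\ref{propNplusOneTuplesNspace} as the image of the rank-deficient configurations.

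The main obstacle is the group-theoretic bookkeeping in the middle step, and it must be carried out with care. One has to check that $S^1=\SO(2)$ is normal in $\langle S^1,c\rangle$ with quotient $\Zt=\langle c\rangle$, so that forming $S^5/S^1=\CP^2$ and then dividing by $\conj$ genuinely agrees with dividing $S^5$ by $O(2)$ in one step; and that the generated subgroup of $O(6)$ is exactly the diagonal copy of $O(2)$, neither a proper subgroup nor a larger one. Once this identification is secured, the statement is a formal specialization of the machinery already established in Section~\ref{secLinearAlgebra}.
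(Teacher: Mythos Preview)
Your proof is correct. The paper itself does not supply a direct proof of the Kuiper--Massey theorem; it cites the result to \cite{Kui,Mass} and only later proves the generalization Proposition~\ref{propQtoricInvol} covering all quasitoric 4-manifolds. That argument proceeds by an entirely different route: it fibers $X^4/\sigma$ over the moment polygon $P^2$, identifies the generic fiber as $T^2/\sigma\cong S^2$ via \eqref{eq2TorusTo2Sphere}, and observes that the fibers degenerate to intervals and points over $\dd P^2$, yielding a sphere. Your argument instead specializes the Gram-matrix machinery of Section~\ref{secLinearAlgebra}, identifying $\CP^2/\conj$ with $Y_{2,3}/O(2)$ and invoking Proposition~\ref{propNplusOneTuplesNspace}; this is essentially Arnold's viewpoint, and the paper alludes to it in passing (``the quotient $\CP^2/\conj$ can be understood as the boundary of 5-dimensional spectrohedron'') without spelling it out. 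Your route is shorter and ties cleanly into the proof of Theorem~\ref{thmHP}, but it does not generalize to arbitrary (quasi)toric surfaces, which is exactly why the paper develops the polytope-fibration argument instead.
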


On the other hand, similar fact holds not only for $\CP^2$ but for many other 4-manifolds as well. First, we recall several particular examples. 

\begin{ex}\label{exSphereProdInvol}
Let $\sigma\colon S^2\to S^2$ be the reflection in the equatorial plane of a sphere. This involution obviously coincides with $\conj\colon\CP^1\to\CP^1$. Then, considering the involution $\sigma\colon S^2\times S^2\to S^2\times S^2$, which acts on both coordinates simultaneously, it can be proved that
\begin{equation}\label{eqProdSpheresIn}
(\CP^1\times\CP^1)/\conj=(S^2\times S^2)/\sigma\cong S^4.
\end{equation}
See e.g. \cite{Gug}, where a more general collections of involutions on the products of spheres have been considered. One can view
\eqref{eqProdSpheresIn} as the ``complex version'' of the classical homeomorphism
\begin{equation}\label{eq2TorusTo2Sphere}
T^2/\sigma\stackrel{\cong}{\rightarrow} S^2,\mbox{ where } T^2=\Ro^2/\Zo^2\mbox{ and }\sigma(a)=-a,
\end{equation}
given by the Weierstrass's $\wp$-function.
\end{ex}

\begin{ex}\label{exSphereInvol}
Another example is given by the involution $\sigma$ on $S^4$, where
\[
S^4=\{(r,z_1,z_2)\in \Ro\times\Co^2\mid r^2+|z_1|^2+|z_2|^2=1\}
\]
and $\sigma((r,z_1,z_2))=(r,\overline{z}_1,\overline{z}_2)$. It can be seen that 
$S^4=\Sigma(S^1\ast S^1)$, and the involution acts trivially on the first factor of the join (which corresponds to the real parts of $z_i$) and acts freely on the second factor of the join (which corresponds to the imaginary parts of $z_i$). Hence we have, again, $S^4/\sigma\cong \Sigma(S^1\ast (S^1/\sigma))\cong \Sigma(S^1\ast S^1)\cong S^4$.
\end{ex}

Now, according to the result of Orlik and Raymond \cite{OrlRaym}, any simply connected closed 4-manifold $X^4$, acted on by a torus $T^2$, with no nontrivial finite stabilizers, is diffeomorphic to either $S^4$ or an equivariant connected sum of several copies of $\CP^2$, $\CP^1\times\CP^1$, and $\overline{\CP}^2$ (the manifold $\CP^2$ with the reversed orientation) along $T^2$-fixed points. All these spaces carry the natural involutions $\sigma$, and it is not difficult to check that their $T^2$-fixed points have isomorphic tangent $\sigma$-representations (up to orientation reversals). Hence, $X^4$ is represented as a $\sigma$-equivariant connected sum, so its orbit space $X^4/\sigma$ is homeomorphic to $S^4\#\cdots\#S^4\cong S^4$. This, in particular, proves

\begin{thm}[\cite{Fin}]\label{thmFinash}
Let $\conj\colon X^4\to X^4$ be the antiholomorphic involution of complex conjugation on a smooth compact toric surface $X^4$. Then $X^4/\conj\cong S^4$.
\end{thm}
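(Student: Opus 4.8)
The plan is to reduce an arbitrary toric surface to a handful of elementary building blocks via the Orlik--Raymond classification, dispose of each block using the examples already established together with the Kuiper--Massey theorem, and then verify that passing to the quotient by $\conj$ is compatible with equivariant connected sums.

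First I would record the two structural facts that make the classification applicable. A smooth compact toric surface $X^4$ is simply connected (smooth complete toric varieties admit a cell structure with cells only in even dimensions) and carries a locally standard effective $T^2$-action with isolated fixed points and no nontrivial finite stabilizers. Moreover $\conj$ normalizes this action: conjugating all homogeneous coordinates satisfies $\conj(t\cdot x)=t^{-1}\cdot\conj(x)$, so $\conj$ is $T^2$-equivariant up to the inversion automorphism $t\mapsto t^{-1}$ of the torus. In particular $\conj$ maps $T^2$-orbits to $T^2$-orbits and fixes every $T^2$-fixed point (each such point is a real point of the toric variety).

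Next I would invoke Orlik--Raymond \cite{OrlRaym}: $X^4$ is equivariantly diffeomorphic to $S^4$, or to an equivariant connected sum of copies of $\CP^2$, $\overline{\CP}^2$, and $\CP^1\times\CP^1$, each performed at $T^2$-fixed points. The crucial local input is that at every fixed point the tangent $\conj$-representation is the standard complex conjugation on $T_xX\cong\Co^2$: the tangent space splits into two weight planes, each a complex line on which $\conj$ acts as $z\mapsto\overline{z}$. Since all fixed points therefore carry the same tangent $\conj$-representation (up to orientation, which is precisely what forces $\overline{\CP}^2$ into the list), the equivariant connected sum can be upgraded to a $\conj$-equivariant one: one excises invariant disks around paired fixed points and glues the boundary $3$-spheres by a diffeomorphism intertwining the two $\conj$-actions, which exists because the induced actions on the boundary spheres are conjugate.

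Then I would use that, for such a $\conj$-equivariant sum, the quotient commutes with the gluing, i.e. $(X_1\mathbin{\#}X_2)/\conj\cong(X_1/\conj)\mathbin{\#}(X_2/\conj)$. This holds because the tangent $\conj$-representation at a fixed point is of type $\id\oplus(-\id)$ on $\Ro^2\oplus\Ro^2$, so an invariant disk $D^4$ descends to an honest disk $D^4/\conj\cong D^4$ in the quotient and the boundary identification $S^3/\conj$ matches up; removing these descended disks and regluing produces the ordinary connected sum of the quotients. Combining this with $\CP^2/\conj\cong S^4$ (Kuiper--Massey), $(\CP^1\times\CP^1)/\conj\cong S^4$ (Example \ref{exSphereProdInvol}), $\overline{\CP}^2/\conj\cong S^4$, and $S^4/\conj\cong S^4$ (Example \ref{exSphereInvol}), we obtain $X^4/\conj\cong S^4\mathbin{\#}\cdots\mathbin{\#}S^4\cong S^4$. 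The main obstacle will be this compatibility step: carefully matching the local tangent $\conj$-representations at the fixed points being identified (including the orientation bookkeeping that distinguishes $\CP^2$ from $\overline{\CP}^2$), so that the Orlik--Raymond equivariant sum becomes $\conj$-equivariant and its quotient is genuinely the connected sum of the quotients. Once this local matching is secured, the global conclusion follows formally.
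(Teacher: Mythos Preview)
Your proposal is correct and follows essentially the same route as the paper's argument: invoke Orlik--Raymond to decompose $X^4$ as an equivariant connected sum of $\CP^2$, $\overline{\CP}^2$, $\CP^1\times\CP^1$ (or $S^4$) at fixed points, observe that the tangent $\conj$-representations at all fixed points agree (up to orientation), upgrade the connected sum to a $\conj$-equivariant one, and conclude $X^4/\conj\cong S^4\#\cdots\#S^4\cong S^4$. Your write-up is in fact more careful than the paper's sketch about why the quotient commutes with the connected sum (the local model $D^4/\conj\cong D^4$), and you have correctly identified the orientation bookkeeping as the only place requiring attention.
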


In the focus of toric topology, there lie the notions of a quasitoric manifold and a moment-angle manifold. Next we recall these basic notions and show that there is a natural ``complex conjugation'' on a quasitoric manifold. We prove the analogue of Theorem \ref{thmFinash} without referring to the result of Orlik and Raymond. In Section \ref{secQuoricSurfaces} we extend the analogy to a more interesting 8-dimensional quaternionic case.

\begin{con}
A quasitoric manifold is a manifold $X=X^{2n}$ with a locally standard action of $T^n$, and the orbit space diffeomorphic, as a manifold with corners, to a simple polytope. Each quasitoric manifold determines a characteristic pair $(P,\lambda)$, where $P$ is the polytope of the orbit space, and $\lambda\colon\Facets(P)\to\Subgroups_1(T^n)$ is the map from the set of facets of $P$ to the set of 1-dimensional subgroups of $T^n$. The value $\lambda(\F_i)$ is the stabilizer of any orbit, lying in the interior of a facet $\F_i$. Since the action is locally standard, the characteristic function $\lambda$ satisfies the conditions: (1) each $\lambda(\F_i)$ is a circle; (2) whenever facets $\F_{i_1},\ldots,\F_{i_n}$ of $P$ intersect in a vertex of $P$, the corresponding values $\lambda(\F_{i_1}),\ldots,\lambda(\F_{i_n})$ form a basis of $T^n$ (i.e. the homomorphism $\lambda(\F_{i_1})\times\cdots\times\lambda(\F_{i_n})\to T^n$ induced by inclusions is an isomorphism). The second condition is called $(\ast)$-condition; it obviously implies the first condition.

On the other hand, given a characteristic pair $(P,\lambda)$, it is possible to construct the model space \cite{DJ}
\begin{equation}\label{eqQtoric}
X_{(P,\lambda)}=(P\times T^n)/\simc,
\end{equation}
where the equivalence relation is generated by the relations $(x_1,t_1)\sim (x_2,t_2)$, whenever
$x_1=x_2\in\F_i$ and $t_1^{-1}t_2\in \lambda(\F_i)$. The torus $T^n$ acts on $X_{(P,\lambda)}$ by
rotating the second coordinate. The space $X_{(P,\lambda)}$ is a topological manifold if $\lambda$
satisfies $(\ast)$-condition. According to \cite{DJ}, whenever $X$ is a quasitoric manifold,
$(P,\lambda)$ its characteristic pair, the model space $X_{(P,\lambda)}$ is $T^n$-homeomorphic to the original manifold
$X$. 
\end{con}

Additional considerations are required to construct a smooth structure on $X_{(P,\lambda)}$; we now briefly recall these ideas and refer to \cite{BPR} for details.

\begin{con}
Let $P$ be realized as a convex polytope by affine inequalities
$P=\{x\in\Ro^n\mid Ax+b\geqslant 0\}$, where $A$ is an $(m\times n)$-matrix and $b$ is a column $m$-vector. It
is assumed that no redundant inequalities appear in the definition, so that each of $m$ inequalities
corresponds to a facet of $P$. Consider the affine map $i_A\colon \Ro^n\to \Ro^m$, $i_A(x)=Ax+b$. The
map $i_A$ embeds $P$ into the nonnegative cone $\Rg^m$. The moment-angle space is defined as a pullback in the
diagram
\begin{equation}\label{eqZPdef}
\begin{gathered}
\xymatrix{
\Z_P\ar@{->}[d]\ar@{^{(}->}[r] & \Co^m\ar@{->}[d]^\mu & (z_1,\ldots,z_m)\ar@{|->}[d]^\mu \\
P\ar@{^{(}->}[r]^{i_P} & \Rg^m & (|z_1|,\ldots,|z_m|)}
\end{gathered}
\end{equation}
that is $\Z_P = \mu^{-1}(i_P(P))$. The moment-angle space carries a natural $T^m$-action.
It can be proved \cite{BPR} that $\Z_P$ is a smooth manifold,
whenever $P$ is simple.

Using the characteristic function
$\lambda$ on $P$ one can construct the subgroup $K_\lambda\cong T^{m-n}$ of $T^m$, which acts freely on
$\Z_P$. Then the model space $X_{(P,\lambda)}$ can be defined as
the quotient $\Z_P/K_\lambda$. This construction provides a smooth structure on $X_{(P,\lambda)}$: the one
induced from $\Z_P$.
\end{con}

\begin{con}\label{conInvolQtoric}
Let us construct an analogue of the complex conjugation on a quasitoric manifold. There is 
an involution $\sigma$ on $T^n$:
\[
\sigma\colon (t_1,\ldots,t_n)\mapsto (\overline{t}_1,\ldots, \overline{t}_n).
\]
By extending this involution trivially to $P$, we get an involution $\sigma$ on the product $P\times T^n$.
This involution can be correctly descended to the identification space~\eqref{eqQtoric}. 
Indeed, if $t=(t_1,\ldots,t_n)$ lies in a characteristic subgroup $H\subset T^n$, then, obviously,
$(\overline{t}_1,\ldots, \overline{t}_n)=(t_1^{-1},\ldots,t_n^{-1})$ lies in the same subgroup, so the 
collapsing in \eqref{eqQtoric} respects the involution. We get an involution $\sigma$ on the model space $X_{(P,\lambda)}$.
This is the natural analogue of the involution of conjuguation defined on toric varieties.
\end{con}

\begin{rem}
It is possible to define $\sigma$ as the smooth involution on a quasitoric manifold. One
should first consider the conjugation involution on the moment-angle manifold $\Z_P$, then descend it to $X_{(P,\lambda)}\cong\Z_P/K_\lambda$. We leave details to the reader.
\end{rem}

\begin{prop}\label{propQtoricInvol}
Let $X^4=X_{(P^2,\lambda)}$ be a quasitoric 4-manifold and $\sigma\colon X^4\to X^4$ an involution
defined above. Then $X^4/\sigma\cong S^4$.
\end{prop}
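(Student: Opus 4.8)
The plan is to prove the statement by an equivariant connected sum argument, reducing everything to the classical Kuiper--Massey theorem and thereby bypassing the $4$-dimensional classification of Orlik and Raymond used in Theorem~\ref{thmFinash}. The input is a local computation at the $T^2$-fixed points. Over a vertex of $P^2$ the whole torus is collapsed in \eqref{eqQtoric}, so each $T^2$-fixed point $x$ is fixed by $\sigma$, and by Construction~\ref{conInvolQtoric} the involution acts on $T_xX^4\cong\Co^2$ by coordinatewise conjugation. Writing $z_j=x_j+\sqrt{-1}\,y_j$, the fixed subspace is $\{y=0\}$ while $\sigma$ negates $y=(y_1,y_2)$, whence
\[
\Co^2/\conj\cong \Ro^2_x\times(\Ro^2_y/\{\pm1\})\cong\Ro^2\times\Ro^2=\Ro^4,
\]
since negation on $\Ro^2$ has quotient the cone over $S^1$, i.e.\ $\Ro^2$ again (concretely $z\mapsto z^2$). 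Restricting to the unit sphere gives $S^3/\conj\cong S^3$. Away from the fixed locus $\sigma$ acts freely, so $X^4/\sigma$ is a closed topological $4$-manifold, Euclidean near the images of the fixed points.

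Next I would invoke the standard correspondence in toric topology between vertex connected sums of simple polytopes and equivariant connected sums of quasitoric manifolds at fixed points. Every convex $m$-gon is a vertex connected sum of $m-2$ triangles, and this can be arranged compatibly with the characteristic function: splitting off a triangle at a vertex $v$ only uses the two edges at $v$, whose characteristic circles already form a basis by the $(\ast)$-condition. Correspondingly
\[
X^4\cong X_{\Delta_1}\#\cdots\#X_{\Delta_{m-2}},
\]
where each connected sum is taken in an equivariant ball around a $T^2$-fixed point. Since $\sigma$ acts there as standard conjugation, both the balls and the gluing spheres $S^3$ may be chosen $\sigma$-invariant; passing to the quotient and using $S^3/\conj\cong S^3$ from the first step, the equivariant connected sum descends to an ordinary connected sum of the quotient manifolds,
\[
X^4/\sigma\cong(X_{\Delta_1}/\sigma)\#\cdots\#(X_{\Delta_{m-2}}/\sigma).
\]

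The base case is a triangle: a quasitoric manifold over a triangle is $\CP^2$ (in one of its two orientations), and the involution of Construction~\ref{conInvolQtoric} is the usual complex conjugation $[z_0:z_1:z_2]\mapsto[\overline{z_0}:\overline{z_1}:\overline{z_2}]$, so $X_{\Delta}/\sigma\cong\CP^2/\conj\cong S^4$ by the Kuiper--Massey theorem. As a connected sum of copies of $S^4$ is again $S^4$, this yields $X^4/\sigma\cong S^4$. The step deserving the most care, and the main obstacle, is the middle paragraph: one must verify that the pair $(P^2,\lambda)$ really does decompose as an iterated vertex connected sum with consistent characteristic data on all pieces, and that the tangent $\sigma$-representations at the two glued fixed points agree after the change of basis in $\Zo^2$ that matches their characteristic bases, so that the gluing can be made $\sigma$-equivariant. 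Everything else is either the elementary local model above or the classical input $\CP^2/\conj\cong S^4$.
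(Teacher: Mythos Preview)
Your connected-sum argument has a genuine gap: not every characteristic pair $(P^2,\lambda)$ decomposes as an iterated vertex connected sum of triangles compatible with $\lambda$. The simplest obstruction is $\CP^1\times\CP^1$, the square with cyclic edge labels $(1,0),(0,1),(1,0),(0,1)$. Undoing a connected sum with a triangle amounts to deleting one edge $e_i$ of $P$ and letting the flanking edges $e_{i-1},e_{i+1}$ meet at a new vertex; for the square above the flanking labels are always equal, so neither the residual triangle nor the remaining $3$-gon satisfies the $(\ast)$-condition. On the manifold level this is no accident: $S^2\times S^2$ has even intersection form and is therefore not homeomorphic to any connected sum of copies of $\pm\CP^2$. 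In general the step you describe as ``splitting off a triangle at $v$'' does \emph{not} use only the two edges at $v$; it imposes the extra requirement that $\lambda_{i-1},\lambda_{i+1}$ form a basis of $\Zo^2$, which can fail at every edge simultaneously. To salvage the approach you would have to add $S^2\times S^2$ as a base case (this is Example~\ref{exSphereProdInvol}) and then supply a decomposition theorem guaranteeing that $\pm\CP^2$ and $S^2\times S^2$ suffice---which is exactly the Orlik--Raymond input you set out to avoid.

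The paper's own proof sidesteps decompositions entirely. It analyses the projection $p\colon X^4/\sigma\to P^2$ induced by the $T^2$-orbit map and computes the fibres directly from the model~\eqref{eqQtoric}: over an interior point $p^{-1}(x)=T^2/\sigma\cong S^2$ by~\eqref{eq2TorusTo2Sphere}; over a point on a side $p^{-1}(x)$ is a circle modulo a reflection, hence an interval; over a vertex $p^{-1}(x)$ is a point. Thus $X^4/\sigma$ is obtained from $P^2\times S^2$ by collapsing the $S^2$-fibres over $\dd P^2$ to intervals and points, which yields $S^4$ for every $\lambda$ without any inductive step.
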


\begin{proof}
We denote the orbit space $X^4/\sigma$ by $Q$. From the construction it follows that there is a map 
$p\colon Q\to P^2$, induced by the projection of $X$ into its $T^2$-orbit space. For the interior point
$x$ of the polygon $P^2$, the preimage $p^{-1}(x)$ is the quotient $T^2/\sigma$, it is homeomorphic to $S^2$ according to \eqref{eq2TorusTo2Sphere}. If $x$ is the vertex of $P^2$, then, obviously, $p^{-1}(x)$ is a point. Finally, if
$x$ lies on a side of $P^2$, then $p^{-1}(x)$ is the quotient of the circle $S^1$ by a non-free involution, hence $p^{-1}(x)$ is
an interval. Therefore, to obtain $Q$ one needs to start\footnote{At this point we essentially use the fact that the quotient map from a quasitoric manifold to its orbit space admits a section.} with $P^2\times S^2$, and pinch the $S^2$-components over the boundary of $P^2$ into either intervals or points. The result is a topological sphere $S^4$.
\end{proof}

\section{Quoric surfaces}\label{secQuoricSurfaces}

Some parts of the story described in Section \ref{secKuiperMassey} remain true when the torus $T^n$ is replaced by its quaternionic analogue, the group $(S^3)^n$, where $S^3=\Sp(1)$ is the sphere of unit quaternions. However, the generalization should be carried with certain care, due to noncommutativity of the group $S^3$. For the detailed exposition of such generalization we refer to the work of Jeremy Hopkinson \cite{Hopk}: he introduced both the concept and the terminology, which we use here. In particular, quaternionic analogues of quasitoric manifolds are called quoric manifolds in \cite{Hopk}. Here we present only general ideas of this theory.

\begin{defin}
A \emph{quoric manifold} is a smooth $4n$-manifold $X^{4n}$ with an action of $(S^3)^n$ satisfying the conditions

(1) The action is locally modelled by one of the ``standard actions'' of $(S^3)^n$ on $\Ho^n$;

(2) The orbit space is diffeomorphic to a simple polytope.
\end{defin}

Here, unlike the toric case, there are several left actions of $(S^3)^n$ on $\Ho^n$ which can
be called ``standard''. For example, we have the following two left actions of $(S^3)^2$ on $\Ho^2$:
\begin{gather}
(s_1,s_2)(h_1,h_2)=(s_1h_1,s_2h_2); \label{eqStdActions1}\\
(s_1,s_2)(h_1,h_2)=(s_1h_1s_2^{-1},s_2h_2). \label{eqStdActions2}
\end{gather}
In both cases the orbit space is the nonnegative cone $\Rg^2$, with the quotient map
given by $(h_1,h_2)\mapsto(|h_1|,|h_2|)$. The orbits corresponding
to the interior of $\Rg^2$ are free, the orbits corresponding to the sides of $\Rg^2$ are
homeomorphic to $S^3$, and the orbit corresponding to the vertex of $\Rg^2$ is a single point.
Therefore, both actions can be considered as the analogues of the standard $T^2$-action on $\Co^2$.

\begin{rem}\label{remDifferentConjclasses}
The actions \eqref{eqStdActions1} and \eqref{eqStdActions2} are not equivalent in any sense, as we now explain.
Notice, that there exist three distinguished subgroups of $S^3\times S^3$, which are isomorphic to $S^3$:
the first coordinate sphere $S^3_{\{1\}}$, the second coordinate sphere $S^3_{\{2\}}$, and the
diagonal subgroup $S^3_{\{1,2\}}=\{(s,s)\mid s\in S^3\}$. All other subgroups isomorphic to $S^3$ lie in
the conjugacy class of $S^3_{\{1,2\}}$. There is no automorphism
of $(S^3)^2$, which takes the conjugacy class of $S^3_{\{1\}}$ to the conjugacy class of $S^3_{\{1,2\}}$ simply
because the conjugacy classes of $S^3_{\{1\}}$ and $S^3_{\{2\}}$ consist of one element each, while the conjugacy
class of $S^3_{\{1,2\}}$ contains infinitely many subgroups.
The actions  \eqref{eqStdActions1} and \eqref{eqStdActions2} are non-equivalent since the subgroup $S^3_{\{1,2\}}$ appears as the
stabilizer in the second action, while it does not appear as the stabilizer in the first action.
\end{rem}

We are interested mainly in quoric 8-manifolds, so it will be enough to mention that \eqref{eqStdActions2}
gives an exhaustive list of the ``standard actions'' of $(S^3)^2$ on $\Ho^2$, up to weak equivalence.

\begin{con}\label{conQuoricModel}
For a quoric manifold $X^{4n}$ one gets a characteristic pair $(P,\Lambda)$, where $P$
is the orbit polytope, and $\Lambda$ is a characteristic functor, defined on
the poset category of faces of $P$ and taking values in conjugacy classes of
subgroups in $(S^3)^n$. This functor satisfies a collection of technical properties \cite{Hopk}.
With these properties satisfied, one can reconstruct a quoric manifold out of characteristic
pair as the model space
\[
X^{4n}_{(P,\Lambda)}=(P\times (S^3)^n)/\simc,
\]
with the identification determined by $\Lambda$ similarly to \eqref{eqQtoric}:
$(x_1,s_1)\sim (x_2,s_2)$ if $x_1=x_2$ and $s_1^{-1}s_2\in \hat{\Lambda}(F(x_1))$. Here
$F(x_1)$ is the unique face of $P$ containing $x_1$ in its interior; and $\hat{\Lambda}(F(x_1))$
denotes the representative in the conjugacy class $\Lambda(F(x_1))$ (it is possible to take all representatives
simultaneously so that the inclusion order is preserved for the representatives). The group $(S^3)^n$
acts on $X^{4n}_{(P,\Lambda)}$ from the left: $s'[(x,s)]=[(x,s's)]$, the action is well defined.

For a general quoric manifold $X^{4n}$ one can reconstruct the model space $X^{4n}_{(P,\Lambda)}$ out
of characteristic pair of $X^{4n}$. The model space is $(S^3)^n$-equivariantly homeomorphic to $X^{4n}$, see~\cite{Hopk}.
\end{con}

For some characteristic functors $\Lambda$ it is possible to obtain the model $X^{4n}_{(P,\Lambda)}$ as
a smooth manifold, using the same principle as in the quasitoric case. Define the quaternionic
moment-angle space as the pullback in the diagram

\begin{equation}\label{eqZPdef}
\begin{gathered}
\xymatrix{
\ZH_P\ar@{->}[d]\ar@{^{(}->}[r]&\Ho^m\ar@{->}[d]^{\mu^\Ho} & (h_1,\ldots,h_m)\ar@{|->}[d]^{\mu^\Ho}\\
P\ar@{^{(}->}[r]^{i_P}&\Ro_{\geqslant}^m & (|h_1|,\ldots,|h_m|)}
\end{gathered}
\end{equation}

Similar to the complex case, $\ZH_P$ is smooth for a simple polytope $P$.
The group $(S^3)^m$ acts on $\Ho^m$ in one of the standard ways so that the map $\mu^{\Ho}$ is identified with 
the projection to the orbit space. Hence $(S^3)^m$ acts on $\ZH_P$,
and its orbit space is the original polytope $P$. If $\Lambda$ satisfies
some additional conditions (axiomatized in the notion of a \emph{global characteristic functor} in \cite{Hopk}), then
it is possible to choose a subgroup $K_\Lambda\subset (S^3)^m$ which is isomorphic to $(S^3)^{m-n}$ and
acts freely on $\ZH_P$. In this case the smooth manifold $X^{4n}_{(P,\Lambda)}$ can be constructed
as the quotient $\ZH_P/K_\Lambda$.

\begin{ex}
The quaternionic projective space $\HP^n$ is the straightforward example of a quoric manifold.
Its corresponding polytope is an $n$-simplex.
\end{ex}

\begin{rem}
Using the Morse-theoretical argument for a polytope $P$ it can be shown that the
cohomology of quoric manifolds are concentrated in degrees divisible by $4$, and there holds
$\dim H^{4k}(X^{4n}_{(P,\Lambda)})=h_k(P)$, where $(h_0(P),h_1(P),\ldots,h_n(P))$ is the $h$-vector
of a simple polytope $P$.
\end{rem}

\begin{con}
We will consider $8$-dimensional quoric manifolds, i.e. quoric manifolds over polygons. According to Construction \ref{conQuoricModel}, one
needs a characteristic pair. A pair consists of an $m$-gon $P^2$, and a characteristic functor, which in this case assigns to any side of $P^2$ one of the three distinguished subgroups of $S^3\times S^3$: either $S^3_{\{1\}}$ or $S^3_{\{2\}}$ (the coordinate spheres) or $S^3_{\{1,2\}}$ (the diagonal sphere). Obviously, in order for the action to be locally standard, different subgroups should be assigned to neighboring sides of $P^2$. This condition is also sufficient for $n=2$. Therefore, a quoric 8-manifold is encoded by polygons with their sides colored in three paints: $\{S^3_{\{1\}}, S^3_{\{2\}}, S^3_{\{1,2\}}\}$, so that the adjacent sides have distinct colors.

Notice that similar combinatorial objects, 3-colored $m$-gons, appear in the classification of 2-dimensional
small covers (see e.g. \cite{Choi} where, in particular, these objects were enumerated). Nevertheless, there is an important difference between small covers and quoric manifolds over polygons even from the combinatorial viewpoint. In 2-dimensional small covers all colors are
interchangeable, since the automorphisms of the real torus $\Zt^2$ form a permutation group $\Sigma_3$.
In quoric case the colors $S^3_{\{1\}}, S^3_{\{2\}}$ are interchangeable, while the color $S^3_{\{1,2\}}$
cannot be interchanged with any of $S^3_{\{1\}}, S^3_{\{2\}}$ by an automorphism of $S^3\times S^3$, as was explained in Remark \ref{remDifferentConjclasses}.
\end{con}

Our interest in quoric 8-manifolds arises from a surprising fact: they provide the
examples of torus actions of complexity one.

\begin{prop}\label{propQuoricToric}
There is an effective action of a compact torus $T^3$ on any quoric 8-manifold $X^8_{(P^2,\Lambda)}$.
This action has $m$ isolated fixed points and the weights of the tangent representation at
each fixed point are in general position.
\end{prop}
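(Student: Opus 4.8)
The plan is to find a $T^3$ inside a larger symmetry group acting on $X^8_{(P^2,\Lambda)}$ and then verify the complexity-one conditions of Definition~\ref{defGenPosLocStdAct} at the fixed points. The key observation is that each copy of $\Ho\cong\Co^2$ in the local model carries not only the left $S^3$-action but also a commuting \emph{right} circle action: writing $h=z+ju=(z,u)$ as in Section~\ref{secHP}, right multiplication by a unit complex number $t$ acts as $(z,u)\mapsto(zt,ut)$, while left multiplication by the maximal torus $T^1\subset S^3$ acts as $t(z,u)=(tz,t^{-1}u)$. The two coordinate spheres $S^3_{\{1\}},S^3_{\{2\}}$ contribute their maximal circles $T^1_{\{1\}},T^1_{\{2\}}$, and the right-multiplication circle furnishes a third, independent direction. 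First I would assemble these into an effective $T^3$-action that commutes with the defining $(S^3)^2$-action, hence descends to the model space $X^8_{(P^2,\Lambda)}=(P^2\times(S^3)^2)/\!\!\sim$; the subtlety is that the diagonal subgroup $S^3_{\{1,2\}}$ and the right action interact, so one must check that the right circle is still well-defined on the quotient and that the resulting torus is genuinely 3-dimensional and effective.

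Next I would locate the fixed points. By the $h$-vector remark, $\dim H^0=\dim H^8=1$ and $\dim H^4=m-2$, so there are exactly $m$ fixed points, in bijection with the vertices of $P^2$; locally each vertex sits at the origin of a standard chart modeled on $\Ho^2$ via~\eqref{eqStdActions2}. Concretely the $m$ fixed points are the images of $(v,e)$ for each vertex $v$ of the $m$-gon. To read off the tangent weights I would diagonalize the $T^3$-representation on the local $\Ho^2\cong\Co^4$ chart, decomposing $T_xX\cong V(\alpha_1)\oplus V(\alpha_2)\oplus V(\alpha_3)\oplus V(\alpha_4)$ and recording the four weights $\alpha_1,\ldots,\alpha_4\in\Zo^3$. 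The two colors meeting at the vertex determine which left circles act and with which signs, while the right circle contributes a fixed amount to every weight; this is where the difference between the colors $S^3_{\{1\}},S^3_{\{2\}}$ and $S^3_{\{1,2\}}$ (Remark~\ref{remDifferentConjclasses}) enters the weight computation.

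The final and most delicate step is verifying general position: that any three of the four tangent weights at each fixed point are linearly independent in $\Zo^3$. I expect this to be the main obstacle, since it is a genuinely combinatorial condition that must hold simultaneously at every vertex and for all admissible 3-colorings of $P^2$. The favorable feature is that the complex coordinate description $t(z,u)=(tz,t^{-1}u)$ splits each quaternionic coordinate into two weights of \emph{opposite} sign in the left-torus direction, and the common right-action contribution breaks the symmetry that would otherwise force degeneracies; I would check the four $3\times3$ minors of the weight matrix at a single vertex, confirm they are all nonzero, and then argue that the rule assigning distinct colors to adjacent sides guarantees the same conclusion at every vertex regardless of the global coloring. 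Having established that all $m$ fixed points are isolated with weights in general position, the proposition follows directly.
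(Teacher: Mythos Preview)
Your approach coincides with the paper's: the third circle is precisely the diagonal right multiplication $(s_1,s_2)\mapsto(s_1t,s_2t)$ on $(S^3)^2$, descent is verified by checking that each representative subgroup $\hat\Lambda(F)$ is stable under conjugation by $(t,t)$ (trivial for the coordinate spheres, a one-line check for $S^3_{\{1,2\}}$), and the general-position claim is settled by writing out the four weights explicitly in each of the two local models \eqref{eqStdActions1}, \eqref{eqStdActions2}. One small correction to your write-up: commuting with the left $(S^3)^2$-action is \emph{not} what makes the right circle descend, since the identifications in the model space are by \emph{right} cosets of the face stabilizers; the actual mechanism is the conjugation stability you correctly flag as the subtlety immediately afterward, and note that this stability forces the right circle to act diagonally on the two $\Ho$ factors rather than independently.
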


\begin{proof}
It is straightforward to get the action of $T^2$: the torus $T^2$ is naturally a
subgroup of $(S^3)^2$. Since $(S^3)^2$ acts on $X^8_{(P^2,\Lambda)}$, the torus $T^2$
acts as well. The additional circle action is, roughly speaking, ``the diagonal action
from the other side''. Let us construct this action globally. At first, we introduce the
action of $T^3=T_1^1\times T_2^1\times T_3^1$ on $(S^3)^2$ by setting
\begin{equation}\label{eqT3actsonS3squared}
(t_1,t_2,t_3)(s_1,s_2)=(t_1s_1t_3,t_2s_2t_3),
\end{equation}
where $s_1,s_2\in S^3$ are unit quaternions,
and $t_i$ are the elements of the coordinate circles $T_i^1$. We get a
$T^3$-action on $P^2\times (S^3)^2$: the torus acts trivially on the first
factor, and acts by~\eqref{eqT3actsonS3squared} on the second factor.

Now we need to prove
that this action descends correctly to the identification space $(P^2\times (S^3)^2)/\simc$.
It is sufficient to prove the correctness only for the action of the third component $T_3^1$, since
the first two components constitute the part of the natural $(S^3)^2$-action. So far we need to
check that, whenever $(x,(s_1,s_2))\sim (x,(\tilde{s}_1,\tilde{s}_2))$ then
$(x,(s_1t_3,s_2t_3))\sim (x,(\tilde{s}_1t_3,\tilde{s}_2t_3))$.

Recalling the definition of the equivalence $\sim$, we have $(s_1^{-1}\tilde{s}_1,s_2^{-1}\tilde{s}_2)\in
\hat{\Lambda}(F(x))$ and we need to prove that $(t_3^{-1}s_1^{-1}\tilde{s}_1t_3,t_3^{-1}s_2^{-1}\tilde{s}_2t_3)\in
\hat{\Lambda}(F(x))$. Hence we need to check that all possible representative subgroups
$\hat{\Lambda}(F(x))\subset (S^3)^2$ are stable under the conjugation by an element $(t_3,t_3)\in T^2\subset (S^3)^2$.
There is a finite number of possibilities to be checked:
\begin{enumerate}
  \item If $x$ lies in the interior of $P^2$, then the subgroup $\hat{\Lambda}(F(x))$ is trivial
  therefore it is stable under a conjugation.
  \item If $x$ is the vertex of $P^2$, then $\hat{\Lambda}(F(x))$ is the whole group $(S^3)^2$, and
  there is nothing to prove as well.
  \item If $x$ lies on the side of $P$, then $\hat{\Lambda}(F(x))$ is one of the groups $S^3_{\{1\}}$,
  $S^3_{\{2\}}$, or $S^3_{\{1,2\}}$. The coordinate subgroups $S^3_{\{1\}}$, $S^3_{\{2\}}$ are stable under any
  conjugation, since their conjugacy classes consist of single elements, see Remark \ref{remDifferentConjclasses}.
  \item Finally, if $\hat{\Lambda}(F(x))=S^3_{\{1,2\}}$, it is a direct check that $S^3_{\{1,2\}}$ is stable:
  \[
  (s,s)\in S^3_{\{1,2\}} \Rightarrow (t_3^{-1}st_3,t_3^{-1}st_3)\in S^3_{\{1,2\}}.
  \]
\end{enumerate}

Hence we get a well-defined $T^3$-action on the model space $X_{(P^2,\Lambda)}$. It can be seen that
its fixed points correspond to the vertices of the polygon.

The statement about the general position of weights follows from the consideration
of the standard actions \eqref{eqStdActions2}. It can be derived that in the first standard action
of $(S^3)^2$ on $\Ho^2$, the corresponding action of $T^3$ on $\Ho^2\cong \Ro^8$ is given by
\[
(t_1,t_2,t_3)(h_1,h_2)=(t_1h_1t_3,t_2h_2t_3);
\]
and for the second standard action, the corresponding action of $T^3$ is given by
\[
(t_1,t_2,t_3)(h_1,h_2)=(t_1h_1t_2^{-1},t_2h_2t_3).
\]
By writing the actions in complex coordinates we get the weights
\[
(1,0,1),(1,0,-1),(0,1,1),(0,1,-1)
\]
in the first case, and the weights
\[
(1,-1,0),(1,1,0),(0,1,1),(0,1,-1)
\]
in the second case. Both vector collections are in general position.
\end{proof}

Now we prove Theorem \ref{thmQuoricQuotients}, which tells that the $T^3$-orbit space of any quoric 8-manifold is homeomorphic to a sphere $S^5$.

\begin{proof}
Given a construction of the quoric manifold as a model $(P^2\times (S^3)^2)/\simc$, it is
natural to prove the theorem by moding out each fiber $(S^3)^2/\simc$ of the map
\[
(P^2\times (S^3)^2)/\simc\,\,\to P
\]
independently. In order to do this we need several lemmas.

\begin{lem}\label{lemS3toInterval}
Consider the action of $T^2$ on the sphere $S^3$ of unit quaternions given by $(t_1,t_2)s=t_1^{\pm1}st_2^{\pm1}$.
Then its orbit space is homeomorphic to a closed interval $D^1$.
\end{lem}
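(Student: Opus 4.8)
The plan is to pass to complex coordinates, identify the action with a standard coordinatewise circle action on $S^3\subset\Co^2$, and then read off the orbit space directly. Writing a unit quaternion as $s=z+ju$ with $z,u\in\Co$ and $|z|^2+|u|^2=1$, and recalling that $jw=\overline{w}j$ for $w\in\Co$, one computes that left multiplication by a complex unit $t_1$ sends $(z,u)$ to $(t_1z,t_1^{-1}u)$ (the identity already used in Section~\ref{secHP}), while right multiplication by $t_2$ sends $(z,u)$ to $(zt_2,ut_2)=(t_2z,t_2u)$. Combining these, and inserting the inverses dictated by the chosen signs, shows that in each of the four sign cases the action has the form
\[
(t_1,t_2)(z,u)=(t_1^{\pm1}t_2^{\pm1}z,\;t_1^{\mp1}t_2^{\pm1}u),
\]
i.e. the two complex coordinates are rotated by two characters $\chi_1,\chi_2\in\Hom(T^2,T^1)\cong\Zo^2$ whose matrix has determinant $\pm2$.

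First I would record that, since this determinant is nonzero, the homomorphism $\chi=(\chi_1,\chi_2)\colon T^2\to T^1\times T^1$ has finite kernel, so its image is a connected closed subgroup of full dimension $2$ in $T^2$, hence all of $T^2$; in other words $\chi$ is surjective. The point of this observation is that the $T^2$-orbit of a point $(z,u)$ for the given action is the set $\{(\chi_1(t_1,t_2)z,\chi_2(t_1,t_2)u)\}$ over all $(t_1,t_2)$, which by surjectivity of $\chi$ is exactly the orbit of $(z,u)$ under the standard coordinatewise action $(a,b)(z,u)=(az,bu)$ of $T^1\times T^1$. Thus the given action and the standard action induce the very same partition of $S^3$ into orbits, and therefore the same quotient space with the same quotient topology.

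It then remains to compute the orbit space of the standard coordinatewise action, which is classical: the map $(z,u)\mapsto(|z|,|u|)$ is continuous, separates orbits, and identifies $S^3/(T^1\times T^1)$ with the arc $\{(r_1,r_2)\in\Rg^2\mid r_1^2+r_2^2=1\}$, a quarter circle homeomorphic to $D^1$. This completes the argument.

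The computation is entirely routine; the only points requiring care are the bookkeeping of the quaternionic multiplication signs (the identity $jw=\overline{w}j$) across the four sign conventions in the statement, and the observation---easy but essential---that \emph{surjectivity} of $\chi$, not its being an isomorphism, is exactly what guarantees that the orbits coincide with those of the standard action. I expect no genuine obstacle beyond this.
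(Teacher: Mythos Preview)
Your proof is correct and follows essentially the same approach as the paper: pass to complex coordinates on $S^3\subset\Co^2$, observe that the action rotates the two coordinates by characters, and identify the orbit space with an arc in $\Rg^2$. The paper is terser---it simply writes the action as $(t_1,t_2)(z_1,z_2)=(t_1t_2z_1,t_1t_2^{-1}z_2)$ and asserts the orbit space is $\{(c_1,c_2)\in\Rg^2\mid c_1+c_2=1\}$---whereas you make explicit the reason this works, namely that the character map $\chi\colon T^2\to T^2$ is surjective (determinant $\pm2$), so the orbits coincide with those of the standard coordinatewise action.
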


\begin{proof}
Assume for simplicity that the action is $(t_1,t_2)s=t_1st_2$. In complex coordinates, it has the form
\[
T^2\circlearrowleft S^3=\{(z_1,z_2)\in\Co^2\mid |z_1|^2+|z_2|^2=1\},\qquad (t_1,t_2)(z_1,z_2)=(t_1t_2z_1,t_1t_2^{-1}z_2).
\]
Hence the orbit space can be identified with the interval 
\[
D^1=\{(c_1,c_2)\in\Rg^2\mid c_1+c_2~=~1\}
\]
(this is the action of $T^2$ on the moment-angle manifold of the interval).
\end{proof}

\begin{lem}\label{lemS3S3toS3}
Consider the action of $T^3$ on $(S^3)^2$ given by $(t_1,t_2,t_3)(s_1,s_2)=(t_1s_1t_3,t_2s_2t_3)$
or by $(t_1s_1t_2^{-1},t_2s_2t_3)$. Then
its orbit space is homeomorphic to $S^3$.
\end{lem}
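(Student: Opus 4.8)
The plan is to reduce the two three-torus actions on $(S^3)^2$ to the single-sphere computation of Lemma \ref{lemS3toInterval}, thereby exhibiting the orbit space as a fibration over the interval $D^1$ whose fibers are spheres collapsing at the two endpoints. First I would treat the first action $(t_1,t_2,t_3)(s_1,s_2)=(t_1s_1t_3,t_2s_2t_3)$. The natural idea is to split off the diagonal-from-the-right circle $T_3^1$: since $t_3$ acts on both coordinates simultaneously from the right, I would first quotient by $T_3^1$ and then by $T_1^1\times T_2^1$. Quotienting $(S^3)^2$ by the free right diagonal circle $T_3^1$ yields a manifold $M$ fibered over $S^3$ (say the first factor, rescaled), and I would then analyze the residual $T_1^1\times T_2^1$-action.

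A cleaner route, which I expect to carry out explicitly, is to write everything in complex coordinates $s_i=(z_i,u_i)\in\Co^2$ with $|z_i|^2+|u_i|^2=1$ and use the formula $t(z,u)=(tz,t^{-1}u)$ recorded earlier for the left action, together with the analogous right-multiplication formula. Substituting these into the action one obtains explicit weights, and the quotient becomes a moment-angle-type computation: the orbit space is cut out by the norm-squared coordinates $c_i=|z_i|^2$ subject to the sphere constraints. I expect the image of the norm map to be a polytope (an interval or a simplex) over which the torus acts, so that $(S^3)^2/T^3$ fibers over this base with spherical fibers. The key structural observation is that modding out the first two torus factors collapses each $S^3$ factor toward an interval exactly as in Lemma \ref{lemS3toInterval}, while the shared third circle ties the two intervals together; the outcome should be the suspension-like gluing that produces $S^3$.

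For the second action $(t_1s_1t_2^{-1},t_2s_2t_3)$ the strategy is the same but the bookkeeping differs, since $t_2$ now appears on the right of the first coordinate and on the left of the second. I would again pass to complex coordinates, read off the weights (the proof of Proposition \ref{propQuoricToric} already records that this action is in general position, with weights $(1,-1,0),(1,1,0),(0,1,1),(0,1,-1)$), and check that after quotienting the orbit space has the same fibered-over-interval structure. The crucial point is that in both cases the two $S^3$ factors each contribute an interval's worth of collapsing while one torus coordinate is ``used up'' gluing them, and a sphere $S^3$ emerges as the total space of intervals fibered over an interval with boundary collapses.

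The main obstacle I anticipate is organizing the iterated quotient so that one genuinely recognizes the result as $S^3$ rather than merely computing its homology or homotopy type; one must identify the precise gluing pattern at the faces where stabilizers jump. Concretely, the difficulty is tracking which torus subcircle stabilizes which stratum of $(S^3)^2$ and verifying that the collapses at the extreme strata produce exactly a sphere and not some other space with the same Betti numbers. I would handle this by making the moment map to the base polytope fully explicit, identifying the preimages of the interior, the facets, and the vertices of the base, and checking directly that the fiber over each interior point is a sphere, the fibers over the facets are lower spheres, and the fibers over vertices are points—so that the total space assembles, stratum by stratum, into the standard cell structure of $S^3$.
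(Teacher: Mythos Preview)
Your plan is in the right spirit but over-complicated, and the opening framing---viewing the quotient as fibered over an interval $D^1$ with sphere fibers collapsing at the endpoints---is off: the natural base is the \emph{square} $(D^1)^2$, not an interval. The paper's proof proceeds in one stroke by enlarging the $T^3$-action to the $T^4=T^2\times T^2$-action on $S^3\times S^3$ in which each $T^2$ acts on its factor exactly as in Lemma~\ref{lemS3toInterval}. Then $(S^3)^2/T^4\cong D^1\times D^1$ is a $2$-disk, and both of the $T^3$-actions in the statement are restrictions of this same $T^4$-action to a codimension-one subtorus. The residual map $(S^3)^2/T^3\to (S^3)^2/T^4\cong (D^1)^2$ therefore has circle fibers over interior points of the square and point fibers over the entire boundary, which is precisely the situation of Proposition~\ref{propDiskToSphereGen}, yielding $S^3$ at once.

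Your route---first splitting off the right-diagonal circle $T_3^1$, or running the explicit moment-angle computation in complex coordinates---would eventually recover the same square as base (the image of the norm-squared map is the product of two intervals, not a single interval or a simplex), but you lose the uniformity: once you see both $T^3$-actions as codimension-one subtori of the same $T^4$, there is no need to treat the two cases separately or to track weights. Your final description of the stratified fibers is also slightly miscalibrated: the interior fibers are circles $S^1$, and \emph{all} boundary fibers (edges and vertices alike) collapse to single points, so the identification with $S^3$ is a direct invocation of Proposition~\ref{propDiskToSphereGen} rather than a cell-by-cell assembly.
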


\begin{proof}
Consider the larger action of $T^4=T^2\times T^2$ on $S^3\times S^3$, where each $T^2$ acts on the corresponding
$S^3$ as in the previous lemma. Then $(S^3)^2/T^4\cong D^1\times D^1$. The $T^3$-action on $(S^3)^2$ is given
by restricting the $T^4$-action to a certain 3-dimensional subtorus. We have a map
\[
(S^3)^2/T^3\to (S^3)^2/T^4\cong (D^1)^2.
\]
Similar to Proposition \ref{propDiskToSphereGen}, the preimages of interior points in the square $(D^1)^2$
are circles, while the preimages of boundary points are single points. Hence $(S^3)^2/T^3$ is a sphere.
\end{proof}

Now consider the action of $T^3$ on $X^8_{(P^2,\Lambda)}$ introduced in the proof of Proposition \ref{propQuoricToric}. We have a map
\[
p\colon X^8_{(P^2,\Lambda)}/T^3\to X^8_{(P^2,\Lambda)}/(S^3)^2=P^2.
\]
The preimage of the point $x\in P^2$ is the double quotient
\[
p^{-1}(x)=((S^3)^2/\hat{\Lambda}(F(x)))/T^3
\]
If $x$ is a vertex of $P^2$, then $p^{-1}(x)$ is a point. If $x$ lies in the interior of a polygon $P^2$,
then $p^{-1}(x)=(S^3\times S^3)/T^3$ is a sphere $S^3$ according to Lemma \ref{lemS3S3toS3}. If $x$ lies on a side of
a polygon $P^2$, then $(S^3)^2/\hat{\Lambda}(F(x))$ is homeomorphic to a sphere $S^3$ and its quotient by
the residual action of $T^2=T^3/(T^3\cap \hat{\Lambda}(F(x)))$ is an interval according to Lemma \ref{lemS3toInterval}.
Hence we are in a similar situation as the one described in Section \ref{secHalfDim}: the space $X^8_{(P^2,\Lambda)}/T^3$
is obtained from the product $P^2\times S^3$ by pinching the 3-spheres over the boundary $\dd P^2$ into contractible spaces. Hence $X^8_{(P^2,\Lambda)}/T^3\cong S^5$.
\end{proof}

We conclude with the proposition, generalizing Arnold's homeomorphism \eqref{eqArnoldThm}.

\begin{prop}\label{propArnoldForQuoric}
Let $T^1$ be the diagonal subtorus of $T^3$ acting on a quoric $8$-manifold $X^8_{(P^2,\Lambda)}$. Then $X^8_{(P^2,\Lambda)}/T^1\cong S^7$.
\end{prop}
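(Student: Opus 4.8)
The plan is to mimic the structure of the proof of Theorem \ref{thmQuoricQuotients}, but now quotienting by the single diagonal circle $T^1=\{(t,t,t)\mid t\in T^1\}\subset T^3$ instead of the full $T^3$. Working with the model $X^8_{(P^2,\Lambda)}=(P^2\times(S^3)^2)/\!\!\sim$ and the action \eqref{eqT3actsonS3squared}, I would first record how the diagonal circle acts on the fiber $(S^3)^2$: an element $t$ sends $(s_1,s_2)$ to $(ts_1t,ts_2t)$. The key reduction is to understand the single-fiber quotient $(S^3\times S^3)/T^1$ over an interior point of $P^2$, and then to analyze how these fibers degenerate over the boundary $\partial P^2$, exactly as in the two preceding lemmas.

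First I would compute the orbit space of a single factor: consider $T^1$ acting on $S^3$ by $t\cdot s=tst$. In the complex coordinates $s=z_1+jz_2$ with $|z_1|^2+|z_2|^2=1$, left multiplication by $t$ acts as $(z_1,z_2)\mapsto(tz_1,t^{-1}z_2)$ and right multiplication by $t$ acts by the same rule, so the combined action $s\mapsto tst$ becomes $(z_1,z_2)\mapsto(t^2z_1,z_2)$. Thus $T^1$ acts on $S^3$ by rotating only the $z_1$-coordinate (with weight $2$), so $S^3/T^1$ is the quotient of $S^3=\{|z_1|^2+|z_2|^2=1\}$ collapsing the $z_1$-circle, which is a $2$-disk $D^2$ (the image recorded by $(|z_1|,z_2)$). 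The diagonal action on the product is then the \emph{same} circle acting simultaneously on both factors, so $(S^3\times S^3)/T^1$ is \emph{not} simply a product of the individual quotients. Here I would instead observe that the diagonal $T^1$ acts on $S^3\times S^3$ with the four complex weights $(2,0),(0,0),(2,0),(0,0)$ becoming, after recording the combined action, a free action away from a codimension-two locus; a cleaner route is to note that $S^3\times S^3$ is a compact $6$-manifold with a circle action whose quotient I expect to be $S^5$, consistent with Arnold's $\HP^2/T^1\cong S^7$ via suspension/join reasoning.

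The main obstacle, and the step requiring the most care, is precisely this fiberwise quotient $(S^3\times S^3)/T^1$: unlike the $T^3$ case of Lemma \ref{lemS3S3toS3}, the diagonal circle does not split as a product action, so I cannot reduce to a bigger torus acting factorwise. I would resolve it by writing $S^3\ast S^3\cong S^7$ (the relevant join appears already in the proof of Theorem \ref{thmHP}) and checking that the diagonal $T^1$-action on $(S^3)^2$ extends compatibly to the join, with $S^7/T^1$ computed by the Hopf-type/join argument to be $S^5$; over interior points of $P^2$ one gets this $S^5$ fiber. Then, following the now-familiar template of Proposition \ref{propDiskToSphereGen} and the end of the proof of Theorem \ref{thmQuoricQuotients}, I would show that over $\partial P^2$ these fibers pinch down (on a side where $\hat\Lambda(F(x))$ is one of $S^3_{\{1\}},S^3_{\{2\}},S^3_{\{1,2\}}$, the quotient $(S^3)^2/\hat\Lambda(F(x))=S^3$ has its residual-$T^1$ quotient equal to a $D^2$, and at a vertex the fiber is a point). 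Assembling, $X^8_{(P^2,\Lambda)}/T^1$ is obtained from $P^2\times S^5$ by collapsing the $S^5$-fibers appropriately over the boundary of the polygon, and the resulting identification space is homeomorphic to $S^7$. The delicate point throughout is the non-product nature of the diagonal circle action, so I would expect the join description of $(S^3)^2$ and careful bookkeeping of weights to carry the argument.
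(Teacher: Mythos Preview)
Your overall strategy is exactly the one the paper uses: analyze the map $X^8_{(P^2,\Lambda)}/T^1\to P^2$ fiberwise, establish that the generic fiber $(S^3\times S^3)/T^1$ is $S^5$, that the side fiber $S^3/T^1$ is $D^2$, that vertex fibers are points, and then conclude by the collapsing argument. Your computation $S^3/T^1\cong D^2$ for the action $s\mapsto tst$ (equivalently $s\mapsto tst^{-1}$, which is what the paper writes; the two differ only by which complex coordinate is rotated) is correct and is the content of the paper's Lemma~\ref{lemArnLem1}. Your boundary analysis---that over each side the residual circle quotient of $(S^3)^2/\hat\Lambda(F(x))\cong S^3$ is a $D^2$---is also correct.

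The one genuine gap is your proposed argument for the key interior fiber $(S^3\times S^3)/T^1\cong S^5$. Passing to the join $S^3\ast S^3\cong S^7$ does not help: the product $S^3\times S^3$ is $6$-dimensional and is not the join, and even if you extend the diagonal circle action to the join, the quotient $S^7/T^1$ is not $S^5$ (for the action you wrote the quotient is the join of $S^3/T^1\cong D^2$ with itself only at the endpoints of the join parameter, while the middle slice is precisely the unknown $(S^3\times S^3)/T^1$, so nothing is gained). The paper's route---and the natural one given that you already proved Lemma~\ref{lemS3S3toS3}---is to repeat that argument one level down: let $T^2$ act on $S^3\times S^3$ with each coordinate circle acting on its own $S^3$ factor by $s\mapsto tst^{\pm1}$, so that $(S^3)^2/T^2\cong D^2\times D^2\cong D^4$ by your single-factor computation; the diagonal $T^1\subset T^2$ then gives a residual circle on $(S^3)^2/T^1$ whose quotient is $D^4$, with free fibers over the interior and point fibers over $\partial D^4$, and Proposition~\ref{propDiskToSphereGen} yields $(S^3)^2/T^1\cong S^5$. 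Replace your join paragraph with this and the proof is complete and coincides with the paper's.
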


\begin{proof}
We have already constructed the $T^3$-action on $S^3\times S^3$ in the proof of Proposition \ref{propQuoricToric}. The induced action of the diagonal circle $T^1\subset T^3$ on the product $S^3\times S^3=\{(h_1,h_2)\in \Ho^2\mid |h_1|=|h_2|=1\}$ is given by
\begin{equation}\label{eqDiagActionOnS3S3}
t((h_1,h_2))=(th_1t^{-1},th_2t^{-1})
\end{equation}
The proof of the proposition follows the same lines as
the proof of Theorem \ref{thmQuoricQuotients}: we use a sequence of lemmas.

\begin{lem}\label{lemArnLem1}
For the action of $T^1$ on $S^3=\{h\in \Ho\mid |h|=1\}$ given by $t(h)=tht^{-1}$ there holds $S^3/T^1\cong D^2$.
\end{lem}

\begin{lem}\label{lemArnLem2}
For the diagonal action of $T^1$ on $S^3\times S^3$ given by \eqref{eqDiagActionOnS3S3} there holds $(S^3\times S^3)/T^1\cong S^5$.
\end{lem}

Lemma \ref{lemArnLem1} is an exercise similar to Example \ref{exSphereInvol}. The proof of Lemma \ref{lemArnLem2} and the remaining proof of the proposition is completely similar to Lemma \ref{lemS3S3toS3} and Theorem \ref{thmQuoricQuotients} respectively.
\end{proof}

\begin{rem}
It is reasonable to expect that quoric 8-manifolds (at least some of them) can be obtained from $\HP^2$ and $\HP^1\times\HP^1$ by a sequence of equivariant connected sums. In this case, Theorem \ref{thmQuoricQuotients} will be a simple consequence of Theorem \ref{thmHP} and the corresponding statement for $\HP^1\times\HP^1$. In a similar way, Proposition \ref{propArnoldForQuoric} is the consequence of Arnold's result and the corresponding statement for $\HP^1\times\HP^1$.
\end{rem}

\section*{Acknowledgement}
I am grateful to Shintaro Kuroki, who asked the question about the $T^3$-orbit space of $\HP^2$, motivating this work. I thank Victor Buchstaber and Nigel Ray, from whom I knew about Jeremy Hopkinson's work on quaternionic toric topology. I also thank Mikhail Tyomkin for bringing the paper of Atiyah and Berndt on octonionic projective plane to my attention.

\end{document}